\title{On canonically fibred algebraic 3-folds---some new examples}
\author{Meng Chen and Aoxiang Cui}
\address{\rm School of Mathematical Sciences \& LMNS, Fudan University,
Shanghai, 200433, China}
\email{mchen@fudan.edu.cn}
\address{\rm Department of Mathematics, Fudan University, Shanghai, 200433, China}
\email{081018001@fudan.edu.cn}
\thanks{Project
supported by the National Outstanding Young
Scientist Foundation (\#10625103) and NSFC key project (\#10731030)}
\newcommand{\bC}{{\mathbb C}}
\newcommand{\bQ}{{\mathbb Q}}
\newcommand{\bP}{{\mathbb P}}
\newcommand\OO{{\mathcal{O}}}
\newcommand\ZZ{{\mathbb{Z}}}
\newcommand{\D}{\displaystyle}
\newtheorem{thm}{Theorem}[section]
\newtheorem{lem}[thm]{Lemma}
\newtheorem{prop}[thm]{Proposition}
\theoremstyle{definition}
\newtheorem{setup}[thm]{}
\newtheorem{question}[thm]{Question}
\newtheorem{exmp}[thm]{Example}
\theoremstyle{remark}
\begin{document}
\numberwithin{equation}{section}
\maketitle
\pagestyle{myheadings} \markboth{\hfill M. Chen and A. X. Cui\hfill}{\hfill
On canonically fibred algebraic 3-folds \hfill}

\begin{abstract} This note aims to improve known numerical bounds proved earlier by Chen \cite{PAMS} and Chen-Hacon \cite{Chen-Hacon} and to present some new examples of smooth minimal 3-folds canonically fibred by surfaces (resp. curves) of geometric genus as large as $19$ (resp. $13$). As an interesting by-product, we present a new class of general type surfaces which are canonically fibred by curves of genus $13$.
\end{abstract}
\bigskip

\rightline{\it To the memory of Eckart Viehweg}

\section{\bf Introduction}

Let $V$ be a nonsingular projective 3-fold of general type. Assume $X$ is a minimal model of $V$ with at worst $\bQ$-factorial terminal singularities. When the geometric genus $p_g(X)\geq 2$, the canonical map $\varphi_1:=\Phi_{|K_X|}$ is usually a key tool for birational classification. If $\varphi_1$ is generically finite, Hacon \cite{Hacon} gave an example to show that $\deg(\varphi_1)$ is usually not bounded from above. If $\varphi_1$ is non-constant and not generically finite \mbox{(i.e. $0<\dim \overline{\varphi_1(X)}<3$)}, then $\varphi_1$ is said to be {\it of fiber type}. In this situation $V$ (or $X$) is said to be {\it canonically of fiber type}. Let $F$ be a birational smooth model of the generic irreducible component in the general fiber of $\varphi_1$. Denote by $g(F)$ (resp. $p_g(F)$) the genus (resp. the geometric genus) of $F$ when $F$ is a curve (resp. a surface). Clearly $F$ is of general type and nonsingular by simple addition formula and the Bertini theorem. We say that $X$ is {\it canonically fibred by curves $F$ (resp. surfaces $F$)}. It is interesting to see if the birational invariants of $F$ are bounded from above. Unfortunately such kind of boundedness was only proved when $X$ is Gorenstein (see Chen-Hacon \cite{Chen-Hacon}).

Restricting our interest to Gorenstein minimal 3-folds $X$ which are canonically of fiber type, Chen-Hacon \cite[Theorem 1.1]{Chen-Hacon} proved the desired boundedness theorem like $g(F)\leq 487$ (resp.  $p_g(F)\leq 434$). However, even if we assume $p_g(X)\gg 0$, the upper bounds (see, for instance, Chen \cite[Theorem 0.1]{PAMS}) for $g(F)$ (resp. $p_g(F)$)  might be far from optimal. Besides, among all known examples, the biggest value of $g(F)$ (resp. $p_g(F)$) is 5.

The motivation of this paper is to study the following:

\begin{question}\label{q} (cf. Chen-Hacon \cite[Question 4.2]{Chen-Hacon}) (1) For canonically fibred Gorenstein minimal 3-folds $X$ of general type, find optimal upper bounds of the invariants of fibers.

(2) Find new examples of $X$ such that the generic irreducible component in the general fiber of $\varphi_1$ has birational invariants as large as possible.
\end{question}

First of all, we aim at improving known upper bounds for $g(F)$ (resp. $p_g(F)$) and shall prove the
following theorem.

\begin{thm}\label{main}
Let $X$ be a Gorenstein minimal projective 3-fold of general type. Assume that $X$ is canonically of fiber type. Let $F$ be a smooth model of the generic irreducible component in the general
fiber of $\varphi_1$. Then
\begin{itemize}
\item[(i)] $g(F)\leq 91$ when $F$ is a curve and $p_g(X)\geq 183$;
\item[(ii)] $p_g(F)\leq 37$ when $F$ is a surface and $p_g(X)\geq 3890$.
\end{itemize}
\end{thm}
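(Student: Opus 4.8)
The plan is to run the by‑now‑standard machinery for bounding the fibres of a canonical map (as in Chen \cite{PAMS} and Chen--Hacon \cite{Chen-Hacon}), sharpening the numerical estimates at every step. I would start by fixing a birational morphism $\pi\colon X'\to X$ from a smooth projective threefold such that the movable part $M$ of $|K_{X'}|$ is base point free, and writing $K_{X'}=\pi^*K_X+E_\pi$ with $E_\pi\ge 0$ exceptional. The technical backbone is that, $X$ being a \emph{minimal} Gorenstein threefold of general type, its singularities are terminal and Gorenstein, hence isolated $cDV$ points; so $\pi$ contracts divisors only to points and $(\pi^*K_X)|_{E_\pi}$ is numerically trivial. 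This makes $(\pi^*K_X)^2\cdot E_\pi=0$ and more generally $(\pi^*K_X)\cdot E_\pi\cdot D=0$ for every divisor $D$, so that in all the intersection‑number computations below the exceptional contributions either vanish or enter with a negative‑definite (hence for upper bounds harmless) sign. I would also use throughout that $\pi^*K_X$ is nef and big, that $\pi^*K_X\ge M$ (as $E_\pi$ lies in the fixed part of $|K_{X'}|$), and that $D_1\cdot D_2\cdot D_3\ge 0$ whenever $D_1,D_2$ are nef and $D_3$ effective. After Stein factorisation $\Phi_{|M|}$ becomes a fibration $f\colon X'\to Y$ with $Y$ a smooth surface in case (i) and a smooth curve in case (ii), whose general fibre is a birational model of $F$.

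The two quantitative inputs would be: (a) the non‑degeneracy of the canonical image --- in case (i), $M=f^*L$ for a base point free big divisor $L$ on $Y$ with $h^0(Y,L)=p_g(X)$, hence $L^2\ge p_g(X)-2$; in case (ii), $|M|$ is composed with the pencil $f$, so $\pi^*K_X\ge M\ge aF$ with $a\ge p_g(X)-1$; and (b) a Miyaoka‑type inequality $K_X^3\le c\,\chi(\omega_X)$, together with the (for $p_g(X)\gg 0$) linear bound $\chi(\omega_X)\le c_0\,p_g(X)$, so that effectively $K_X^3\le c'\,p_g(X)$. The core of the proof is then a short chain of inequalities. For case (ii): by adjunction $K_F=K_{X'}|_F$, so $K_F^2=K_{X'}^2\cdot F=(\pi^*K_X)^2\cdot F+(E_\pi|_F)^2\le(\pi^*K_X|_F)^2$ (the cross term vanishes by the backbone, and $(E_\pi|_F)^2\le 0$ because $E_\pi|_F$ is contracted by $\pi|_F$); moreover $a\,(\pi^*K_X)^2\cdot F\le(\pi^*K_X)^2\cdot M\le(\pi^*K_X)^3=K_X^3$, whence $K_F^2\le K_X^3/(p_g(X)-1)$. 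Passing to a relatively minimal model so that $F$ is a minimal surface and invoking Noether's inequality $p_g(F)\le\tfrac12 K_F^2+2$, I get a bound on $p_g(F)$ tending to $\tfrac12 c'+2$ as $p_g(X)\to\infty$; fixing the constants yields $p_g(F)\le 37$ once $p_g(X)\ge 3890$. Case (i) is parallel but carried out on a general surface $S_0\in|M|$, which is fibred over a curve by the fibre curves $F$: one shows $p_g(S_0)$ lies between a lower bound linear in $p_g(X)$ (from restriction exact sequences) and $h^0(X,2K_X)$, hence --- via $\chi(\OO_{S_0})$ and the surface Miyaoka--Yau inequality --- that $K_{S_0}^2$ is at most linear in $p_g(X)$, and combines this with $K_{S_0}^2=(\pi^*K_X|_{S_0})^2+(E_\pi|_{S_0})^2+2L^2(2g-2)$ and $L^2\ge p_g(X)-2$ to force $g\le 91$ for $p_g(X)\ge 183$.

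The hard part will be the precise accounting of constants --- this is exactly where the improvement over \cite{PAMS} has to come from. Every error term (exceptional divisors, fixed parts, the gap between $F$ and its minimal model, the discrepancy between $\chi(\omega_X)$ and $p_g(X)$, and the negative‑definite terms $(E_\pi|_{S_0})^2$ in case (i)) must be kept under tight enough control, the auxiliary divisors must be chosen so that no numerical factor is wasted, and the resulting linear inequality must be solved for the sharpest bound on $g$, resp.\ $p_g(F)$. A second difficulty is to dispose of the degenerate geometries that would otherwise ruin the estimates --- the canonical image $W$ a scroll or a cone, the base of the relevant pencil rational (which needs a separate, weaker treatment), or the fibre surface irregular; the thresholds $p_g(X)\ge 183$ and $p_g(X)\ge 3890$ are to be calibrated precisely so that these cases add nothing. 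Finally, one has to verify that the surfaces and fibrations produced along the way are genuinely of general type, so that the Noether and Miyaoka inequalities apply --- once more using that $p_g(X)$ is large.
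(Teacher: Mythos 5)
There is a genuine quantitative gap in your treatment of case (ii), and it is exactly the point where the paper has to work hardest. Your chain of inequalities gives, in the best case ($b=1$, where $\chi(\omega_X)\le p_g(X)$), only
$K_{F_0}^2\le \frac{K_X^3}{p_g(X)-1}\le \frac{72\,p_g(X)}{p_g(X)-1}\longrightarrow 72^{+}$,
hence $K_{F_0}^2\le 72$ for $p_g(X)\gg 0$; Noether's inequality $K_{F_0}^2\ge 2p_g(F)-4$ then yields $p_g(F)\le 38$, not $37$. To cross from $38$ to $37$ one must show the \emph{strict} inequality $K_{F_0}^2<72$, and this is precisely what Proposition \ref{ep} of the paper is for: a refined Noether-type bound $K_X^3\ge \bigl(K_{F_0}^2+\frac{1}{4(20K_{F_0}^2+1)}\bigr)p_g(X)$ (plus a correction term when $b=0$). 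The extra summand is extracted by a delicate computation on a general member $S_4\in|4K_X|$ (base point free by S.~Lee's theorem on the factorial Gorenstein minimal model): one first reduces to $K_X\cdot N^2=0$ (using $p_g(X)\ge 56$ when $b=0$, and Chen--Chen--Zhang to identify $\pi^*(K_X)|_F$ with $\sigma^*(K_{F_0})$), then splits the fixed part restricted to $S_4$ into horizontal and vertical components and shows the horizontal part contributes a definite positive amount to $K_X^3$. Nothing in your proposal produces this strict gain; "fixing the constants" in the naive chain cannot do it.

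For case (i) your route is genuinely different from the paper's and, as sketched, too lossy to reach $91$. The paper does not pass through $p_g(S_0)$, $\chi(\OO_{S_0})$ and the surface Miyaoka--Yau inequality; it uses the volume inequality $K_X^3\ge p\beta\xi$ together with the lower bound $\xi\ge \frac{2g(C)-2}{1+1/p+1/\beta}$ from \cite{Explicit2} to get directly $K_X^3\ge (g(C)-1)(p_g(X)-2)$, and then compares with $K_X^3\le 72\chi(\omega_X)\le 90\,p_g(X)$. If you instead bound $2L^2(2g-2)$ by $K_{S_0}^2\le 9\chi(\OO_{S_0})$ and control $\chi(\OO_{S_0})$ via $P_2(X)$, the resulting constant is substantially worse than $91$. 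Moreover, you gloss over the real difficulty in this case: the bound $\chi(\omega_X)\le \frac54 p_g(X)$ from \cite{Chen-Hacon} fails exactly when $q(X)\ge 3$ and the general Albanese fibre is a surface with $q=0$ and $p_g\le 3$; the paper must dispose of these configurations by a separate analysis of the restriction map $H^0(K_V)\to H^0(K_{V_y})$, Beauville's results on canonical maps of surfaces, and a semipositivity argument ruling out $\dim\Phi_{|K_V|}(V_y)=2$. Listing "the fibre surface irregular" among degeneracies to be handled does not identify, let alone resolve, these cases.
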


The second and the more important purpose of this paper is to present several new classes of 3-folds such as:
\begin{itemize}
\item[(1)] $X_{\text{S},19}$ (see Example \ref{S19}), canonically fibred by {\bf surfaces} of general type of geometric genus $p_g(F)=19$;
\item[(2)] $X_{\text{C}, \nu, 13}$ (see Example \ref{C13}), canonically fibred by {\bf curves} of genus $g(F)=13$, where $\nu\geq 3$;
\item[(3)] $Y_{\text{S}, 19}$ (see Example \ref{S19Y});
\item[(4)] $Y_{\text{C}, \nu, 13}$ (see Example \ref{C13Y}), $\nu\geq 3$;
\item[(5)] $Z_{\text{S}, 19}$ (see Example \ref{S19Z});
\item[(6)] $Z_{\text{C}, \nu, 13}$ (see Example \ref{C13Z}), $\nu\geq 3$;
\item[(7)] $X_{\text{S}, 16}$ (see Example \ref{S16X});
\item[(8)] $X_{\text{C}, \nu, 11}$ (see Example \ref{C11X}), $\nu\geq 3$;
\item[(9)] $X_{\text{S}, 13}$ (see Example \ref{S13});
\item[(10)] $X_{\text{C}, \nu, 9}$ (see Example \ref{C9}), $\nu\geq 3$.
 \end{itemize}

An earlier result due to Beauville \cite{Beauville} says that an algebraic surface $S$ of general type can be canonically fibred by curves of genus at most $5$, as long as $\chi(\OO_S)\geq 21$. However, there are no known examples of canonically fibred surfaces with fiber genus $>3$.  All above examples hint that, unlike the situation of surfaces, there might be plenty of canonically fibred minimal 3-folds.

A reward of above 3-fold structure is that we have actually found, in the last section, a new class of general type surfaces (see Example \ref{SS13}) which are canonically fibred by curves of genus $13$. 
\bigskip

As known to several experts, the main idea to treat this kind of questions is to deduce an effective inequality of Noether type while translating Miyaoka-Yau inequality (\cite{Miyaoka, Yau1, Yau2}) in terms of $p_g(X)$. We will go a little bit further in both sides in order to prove Theorem \ref{main}. In the authors' opinion, those examples in the last section are more interesting. Naively we feel that $p_g(F)\leq 37$ in Theorem \ref{main} (ii) is a nearly optimal upper bound.
\bigskip

Throughout we prefer to use ``$\sim$'' to denote linear equivalence whereas ``$\equiv$'' means numerical equivalence.
\bigskip

We would like to thank Fabrizio Catanese, Yongnam Lee, Margarida Mendes Lopes and Roberto Pignatelli for the consultant of relevant results on surfaces of general type with $p_g=0$. We thank Christopher D. Hacon for effective discussions through emails. Many thanks to the anonymous referee whose suggestion greatly improved the exposition of this note.

\section{\bf Technical preparation}

\begin{setup}\label{set}{\bf Set up for $\varphi_1$.}

Let $X$ be a Gorenstein minimal 3-fold of general type with $p_g(X):=h^0(X, \omega_X)\geq 2$. We denote by $\varphi_1$ the canonical rational map
$\varphi_1:X\dashrightarrow{\bP}^{p_g(X)-1}$. By Hironaka's theorem on resolution of singularities, we can take successive blow-ups $\pi:X'\rightarrow X$ along smooth centers, such that
\begin{itemize}
\item[(i)] $X'$ is nonsingular;
\item[(ii)] the movable part of $|K_{X'}|$ is base point free;
\item[(iii)] for a fixed Weil divisor $K_0\sim K_X$, the support of $\pi^*(K_0)+E$ is of simple normal crossing, where $E$ is the exceptional divisor of $\pi$ on $X'$.
\end{itemize}
Set $g:=\varphi_1\circ\pi$. Then \mbox{$g:X'\rightarrow W\subseteq\mathbb{P}^{p_g(X)-1}$}
is a morphism. Taking the Stein factorization of $g$, one gets $X'\overset{f}{\rightarrow} \Gamma \overset{s}\rightarrow W$ where $s$ is finite, $\Gamma$ is normal and $f$ is a fiber space.  So we have the following commutative diagram:
\begin{center}
\begin{picture}(50,80)(100,0) \put(100,0){$X$} \put(100,60){$X'$}
\put(170,0){$W$} \put(172,60){$\Gamma$}
\put(114,64){\vector(1,0){53}} \put(106,55){\vector(0,-1){41}}
\put(175,55){\vector(0,-1){43}} \put(114,58){\vector(1,-1){49}}
\multiput(112,2.6)(5,0){11}{-} \put(162,5){\vector(1,0){4}}
\put(133,70){$f$} \put(180,30){$s$} \put(95,30){$\pi$}
\put(133,-4){$\varphi_{1}$}\put(136,40){$g$}
\end{picture}
\end{center}
\medskip

The fibration $f:X'\rightarrow \Gamma$ is referred to as an {\it induced fibration} of $\varphi_1$.
Write $K_{X'}=\pi^*(K_X)+E$ and $|K_{X'}|=|M|+Z$
where $|M|$ is the movable part of $|K_{X'}|$, $E$ is exceptional and $Z$ the fixed part. Since $\pi^*(K_X)\geq M$, one may also write
$\pi^*(K_X)\sim M+E'$
where $E'$ is an effective divisor. In fact, $E'\leq Z$.

If $\dim\overline{\varphi_1(X)}\geq 2$, a general member $S$ of $|M|$ is a
nonsingular projective surface of general type.

If $\dim\overline{\varphi_1(X)}=1$, denote by $F$ a general fiber of $f$ and set $S=F$. Then $S$
is still a nonsingular projective surface of general type. Under this situation, one has
$$M=\sum_{i=1}^{a_1} F_i\equiv a_1 F.$$ where the $F_i'$s are smooth fibers
of $f$ and, clearly, $a_1\geq p_g(X)-1$.

In both cases, we call $S$ a \emph{generic irreducible
element} of $|M|$ or $|K_{X'}|$. Set
$$
p=
    \begin{cases}
        1 & \text{if $\dim(\Gamma)=2$},\\
        a_1 & \text{if $\dim(\Gamma)=1$}.
    \end{cases}
$$
Hence, we always have $M\equiv pS.$
\end{setup}

\begin{setup}\label{ineq1}{\bf Volume inequality}.

Assume $X$ is canonically of fiber type. Pick a generic irreducible element $S$ of $|K_{X'}|$. Suppose there is a movable linear system $|G|$ on $S$ with a smooth generic irreducible element $C$. Then, by Kodaira Lemma, there exists a positive rational number $\beta$ such that $\pi^*(K_X)|_S- \beta C$ is numerically equivalent to an effective $\bQ$-divisor. As being recognized in \cite[Inequality (2.1)]{Explicit2}, one has the following inequality:
\begin{equation}\label{vol} K_X^3\geq p\beta\xi\end{equation}
where $\xi:=(\pi^*(K_X).C)_{X'}$.
\end{setup}

\begin{setup}\label{ineq2}{\bf An inequality bounding $\xi$ from below}.

Keep the same notation as above. By \cite[Inequality (2.2)]{Explicit2}, one has
\begin{equation}\label{22}
\xi\geq \frac{2g(C)-2}{1+\D\frac{1}{p}+\D\frac{1}{\beta}}.\end{equation}
\end{setup}

\section{\bf The canonical family of curves}

Assume $\dim \Gamma=2$ in this section. We have an induced fibration $f:X'\rightarrow \Gamma$ with the general fiber a smooth curve $C$ with $g(C)\geq 2$. Keep the same notation as in \ref{set}.

\begin{prop}\label{g(C)}
Let $X$ be a Gorenstein minimal projective 3-fold of general type.
If $\dim\overline{\varphi_1(X)}=2$, then $$K_X^3\geq\Big\lceil\frac{2g(C)-2}{2+\frac{1}{p_g(X)-2}}\Big\rceil(p_g(X)-2).$$
In particular, $K_X^3\geq (g(C)-1)(p_g(X)-2)$ when $p_g(X)\geq 111$.
\end{prop}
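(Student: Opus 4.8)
The plan is to apply the two inequalities set up in \ref{ineq1} and \ref{ineq2} to the induced fibration $f:X'\to\Gamma$, where $\dim\Gamma=2$ and the generic fiber is the curve $C$. In the notation of \ref{set}, since $\dim\Gamma=2$ we have $p=1$, and the natural movable system on the generic irreducible element $S$ of $|K_{X'}|$ to use is $|G|=|M|_S|$ (or better, the system cut out by fibres of $f$), whose generic irreducible element is essentially $C$ itself. First I would make the restriction $\pi^*(K_X)|_S\geq \beta C$ explicit: since $S\in|M|$ and $M\leq \pi^*(K_X)$, one gets $\pi^*(K_X)|_S\geq M|_S$, and $M|_S$ is a sum of at least $p_g(X)-2$ fibres of the restricted fibration $S\to\Gamma$ (this is the standard argument: $h^0(\Gamma,\ldots)$ drops by at most one when restricting, so $|M|_S|$ moves in a pencil of fibres with at least $p_g(X)-2$ of them). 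Hence one may take $\beta=p_g(X)-2$.

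The core computation is then to substitute $p=1$ and $\beta=p_g(X)-2$ into \eqref{22} to get
\[
\xi\;\geq\;\frac{2g(C)-2}{1+1+\frac{1}{p_g(X)-2}}\;=\;\frac{2g(C)-2}{2+\frac{1}{p_g(X)-2}},
\]
and then into the volume inequality \eqref{vol}:
\[
K_X^3\;\geq\;p\,\beta\,\xi\;=\;(p_g(X)-2)\cdot\frac{2g(C)-2}{2+\frac{1}{p_g(X)-2}}.
\]
Since $K_X^3$ is an integer and the factor $p_g(X)-2$ is an integer, the quantity $\xi$ must itself be at least the ceiling of the fractional bound, which upgrades the estimate to
\[
K_X^3\;\geq\;\Big\lceil\frac{2g(C)-2}{2+\frac{1}{p_g(X)-2}}\Big\rceil(p_g(X)-2),
\]
giving part one.

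For the ``in particular'' clause I would argue that when $p_g(X)\geq 84$ the denominator $2+\frac{1}{p_g(X)-2}$ is close enough to $2$ that the ceiling of $\frac{2g(C)-2}{2+1/(p_g(X)-2)}$ is at least $g(C)-1$. Concretely, $\frac{2g(C)-2}{2+1/(p_g(X)-2)} > g(C)-1$ would be immediate, but one must be careful because subtracting a small amount could drop the ceiling below $g(C)-1$ only if $2g(C)-2$ were itself just above an even integer in a bad way; the honest check is that $\frac{2g(C)-2}{2+1/(p_g(X)-2)} \geq (g(C)-1) - \varepsilon$ for $\varepsilon<1$, so its ceiling is $\geq g(C)-1$, and one needs $\frac{2g(C)-2}{2+1/(p_g(X)-2)}\geq g(C)-2+\delta$ for some $\delta>0$, i.e. $\frac{g(C)-1}{p_g(X)-2}\leq$ something controlled; bounding $g(C)$ crudely (e.g. using the trivial $g(C)\leq$ a linear function of $p_g(X)$ coming from part one together with known bounds on $K_X^3$, or simply treating $g(C)$ as fixed while $p_g(X)\to\infty$) shows $p_g(X)\geq 84$ suffices. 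This last numerical threshold is the only genuinely fiddly point; the main conceptual step — identifying $\beta=p_g(X)-2$ and $p=1$ — is routine given the machinery already assembled in Section 2.
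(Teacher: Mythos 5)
Your derivation of the main displayed inequality follows the same route as the paper (restrict the canonical system to a general $S\in|M|$, take $p=1$ and $\beta\geq p_g(X)-2$, then feed into (\ref{vol}) and (\ref{22})), but the step that produces the ceiling is justified incorrectly. From $K_X^3\geq (p_g(X)-2)\xi$ and $\xi\geq r$ with $r:=\frac{2g(C)-2}{2+1/(p_g(X)-2)}$, the integrality of $K_X^3$ and of $p_g(X)-2$ gives at best $K_X^3\geq\lceil (p_g(X)-2)r\rceil$, which is in general weaker than $(p_g(X)-2)\lceil r\rceil$; nothing about $\xi$ can be inferred from $K_X^3$ being an integer. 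What is actually needed, and what the paper uses, is that $\xi=\pi^*(K_X)\cdot C=K_X\cdot\pi_*(C)$ is itself an integer because $K_X$ is Cartier -- this is precisely where the Gorenstein hypothesis enters. With that observation the first inequality is fine; as written, your step fails.

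The ``in particular'' clause is not actually proved in your proposal. The ceiling is $\geq g(C)-1$ exactly when $r>g(C)-2$, which unwinds to $g(C)-2<2(p_g(X)-2)$; so one needs an explicit upper bound on $g(C)$ valid as soon as $p_g(X)\geq 84$. Treating $g(C)$ as fixed while $p_g(X)\to\infty$ cannot produce the threshold $84$, and the ``crude linear bound'' you allude to is never carried out. The paper supplies the missing ingredient by extracting from the proof of Chen--Hacon \cite[Theorem 1.1(1)]{Chen-Hacon} the bound $g(C)\leq 166$ whenever $p_g(X)\geq 84$, from which it deduces $\pi^*(K_X)\cdot C> g(C)-2$ and hence, by the integrality of $\xi$, $\xi\geq g(C)-1$, giving $K_X^3\geq (g(C)-1)(p_g(X)-2)$. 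Without some such quantitative bound on $g(C)$ in terms of $p_g(X)$, the constant $84$ in the statement remains unsupported.
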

\begin{proof} Take $|G|=|S|_S|$ on a general member $S$. Then $G\equiv \beta C$ with $\beta\geq p_g(X)-2$. Thus the inequality (\ref{vol}) gives
$$K^3_X\geq (\pi^*(K_X).C) (p_g(X)-2).$$
By inequality (\ref{22}), one has
$$\pi^*(K_X).C\geq \frac{2g(C)-2}{2+\frac{1}{p_g(X)-2}}.$$

The proof of Chen-Hacon \cite[Theorem 1.1(1)]{Chen-Hacon} actually implies $g(C)\leq 164$ whenever $p_g(X)\geq 111$. Now under the condition $p_g(X)\geq 111$, one gets $\pi^*(K_X).C>g(C)-2$. Since $\pi^*(K_X).C$ is an integer, we get $\pi^*(K_X).C\geq g(C)-1$.
\end{proof}

Combining Miyaoka-Yau inequality (Miyaoka \cite{Miyaoka} and Yau \cite{Yau1, Yau2}) and Theorem \ref{g(C)}, we only need to get an upper bound of $\chi(\omega_X)$.
In Chen-Hacon \cite[Proposition 2.1]{Chen-Hacon}, we know that
$$\chi(\omega_X)\le (1+\frac{1}{p_g(V_y)})p_g(V),$$
where $V$ is the smooth model of $X$, and $V_y$ is a generic irreducible component in the general fiber of Albanese morphism of $V$.

In the following theorem, we will bound $\chi(\omega_X)$ with a careful classification.

\medskip
\begin{thm}\label{2} Let $X$ be a Gorenstein minimal projective 3-fold of general type. Assume that $X$ is canonically fibred by curves $C$. Then $g(C)\leq 91$ whenever $p_g(X)\geq 183$.
\end{thm}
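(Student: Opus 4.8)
The plan is to sandwich $K_X^3$ between a Noether--type lower bound and a Miyaoka--Yau--type upper bound, both expressed purely in terms of $p_g(X)$ and $g(C)$, and then to solve for $g(C)$. Half of this is already available: since $p_g(X)\geq 183>84$, Proposition \ref{g(C)} gives
\[
K_X^3\ \geq\ (g(C)-1)\,(p_g(X)-2).
\]
So the task is to produce an upper bound $K_X^3\leq \lambda\, p_g(X)+\mu$ with $\lambda$ small enough that, after combining, the threshold for the inequality $g(C)\leq 91$ comes out at $p_g(X)=183$.

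For the upper bound I would invoke the Miyaoka--Yau inequality for Gorenstein minimal $3$-folds of general type, in the form $K_X^3\leq 72\,\chi(\omega_X)$ (equivalently $K_X^3\leq\tfrac{8}{3}K_X\cdot c_2(X)$ together with $K_X\cdot c_2(X)=24\chi(\omega_X)$), and then translate $\chi(\omega_X)$ into $p_g(X)$. By Serre duality $\chi(\omega_X)=p_g(X)+q(X)-h^2(\OO_X)-1$, so the crude bound $\chi(\omega_X)\leq p_g(X)+q(X)-1$ is useless until $q(X)$ is controlled. Here I would exploit the induced fibration $f:X'\to\Gamma$ over the surface $\Gamma$. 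On a smooth model of $\Gamma$ the Leray spectral sequence for $\omega_{X'}$, together with $R^1f_*\omega_{X'}\cong\omega_\Gamma$ (Koll\'ar) and $f_*\omega_{X'}=f_*\omega_{X'/\Gamma}\otimes\omega_\Gamma$, yields $\chi(\omega_{X'})=\chi(\Gamma,f_*\omega_{X'})-\chi(\OO_\Gamma)$; since $f_*\omega_{X'/\Gamma}$ is a nef (Fujita) bundle of rank $g(C)$ and $p_g(X)=h^0(\Gamma,f_*\omega_{X'})$, Riemann--Roch on $\Gamma$ and the constancy of the continuous part of $R^1f_*\OO_{X'}$ express $q(X)$ and $\chi(\omega_X)$ through $p_g(X)$, the rank $g(C)$, and numerical data of $\Gamma$. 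Feeding in the a priori bound $g(C)\leq 166$ (valid once $p_g(X)\geq 84$, cf.\ the proof of Proposition \ref{g(C)}) and distinguishing cases according to the Kodaira dimension/structure of $\Gamma$, I expect an estimate of the shape
\[
\chi(\omega_X)\ \leq\ p_g(X)+\tfrac12 g(C)+O(1),\qquad\text{hence}\qquad K_X^3\ \leq\ 72\,p_g(X)+36\,g(C)+O(1).
\]

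Combining the two displays gives $(g(C)-1)(p_g(X)-2)\leq 72\,p_g(X)+36(g(C)-1)+O(1)$, i.e.
\[
(g(C)-1)\bigl(p_g(X)-38\bigr)\ \leq\ 72\,p_g(X)+O(1),\qquad\text{so}\qquad g(C)-1\ \leq\ \frac{72\,p_g(X)+O(1)}{p_g(X)-38},
\]
and the right-hand side is decreasing in $p_g(X)$; the point $p_g(X)=183$ is exactly the threshold past which it drops below $91$, so $g(C)\leq 91$ for $p_g(X)\geq 183$. The only delicate point is the middle paragraph: the whole game is to turn the Miyaoka--Yau inequality, via the fibration structure (and the already-established boundedness $g(C)\leq 166$), into a bound on $\chi(\omega_X)$ linear in $p_g(X)$ with leading coefficient $1$ and a small coefficient of $g(C)$, and to get those constants sharp enough that the threshold is $183$ and not something larger---this, together with the sharpened lower bound of Proposition \ref{g(C)}, is where the improvement over Chen--Hacon is spent.
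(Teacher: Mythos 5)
Your outer skeleton matches the paper's: sandwich $K_X^3$ between the lower bound $(g(C)-1)(p_g(X)-2)$ of Proposition \ref{g(C)} and the Miyaoka--Yau bound $72\chi(\omega_X)$, then turn $\chi(\omega_X)$ into a function of $p_g(X)$. But the entire content of the theorem lives in the step you defer to the ``middle paragraph,'' and the route you sketch for it is not the paper's and is not shown to work. The paper does \emph{not} bound $\chi(\omega_X)$ via the Leray spectral sequence of the canonical fibration $f:X'\to\Gamma$; it splits into cases by $q(X)$ and, when $q(X)\geq 3$, by the structure of the generic irreducible component $V_y$ of the Albanese fiber. In the generic case (everything except $V_y$ a surface with $q(V_y)=0$ and $p_g(V_y)\leq 3$) it quotes Chen--Hacon to get $\chi(\omega_X)\leq \tfrac54 p_g(X)$, whence $K_X^3\leq 90\,p_g(X)$; the threshold $183$ is exactly the solution of $90p_g/(p_g-2)<91$, i.e.\ it comes from the coefficient $\tfrac54\cdot 72=90$, not from your hoped-for shape $72\,p_g(X)+36\,g(C)+O(1)$ (under which the threshold would depend on the unspecified $O(1)$ and is not ``exactly $183$'' as you assert). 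The remaining cases, where the $\tfrac54 p_g(X)$ bound is unavailable, are handled by completely different arguments: $p_g(V_y)=1$ is impossible; for $p_g(V_y)=2,3$ one identifies $C$ with a generic irreducible element of (a subsystem of) $|K_{V_y}|$ and bounds $g(C)\leq 28$, $36$ or $37$ directly via Beauville's results and surface geography, or derives a contradiction with $\dim\Gamma=2$ using Fujita semipositivity of $\hat a_*\omega_{Z/Y}$. None of this case analysis appears in your proposal.

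Your alternative route also has a concrete obstacle you do not address: after writing $\chi(\omega_{X'})=\chi(\Gamma,f_*\omega_{X'})-\chi(R^1f_*\omega_{X'})$ with $R^1f_*\omega_{X'}\cong\omega_{\Gamma'}$ on a smooth model $\Gamma'$, you must control $h^1(\Gamma',f_*\omega_{X'})$, $h^2(\Gamma',f_*\omega_{X'})$ and $\chi(\OO_{\Gamma'})$. The surface $\Gamma$ is merely the Stein factorization of the canonical image, and its invariants are not a priori bounded in terms of $p_g(X)$ and $g(C)$; nefness of $f_*\omega_{X'/\Gamma}$ alone does not kill the $h^1$ term on a surface base (unlike the curve-base situation exploited in Theorem \ref{T2}). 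So the claimed estimate $\chi(\omega_X)\leq p_g(X)+\tfrac12 g(C)+O(1)$ is unsubstantiated, and with it the whole numerical conclusion. To repair the proof you should either carry out the paper's Albanese-based case analysis or supply a genuine proof of a linear bound on $\chi(\omega_X)$ with an explicit constant, and then recompute the threshold honestly.
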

\begin{proof}

Assume $q(X)\leq 2$, then $\chi(\omega_X)\leq p_g(X)+1$ and one can easily verify the statement.

Assume $q(X)\geq 3$. Take a smooth birational model $V$ of $X$. Denote by $V_y$ a generic irreducible component
in the general fiber of the Albanese morphism of $V$. Note that $p_g(V_y)>0$ since $p_g(V)=p_g(X)>0$.

{\it Case (1)}. As long as $V_y$ is not a surface with $q(V_y)=0$ and $p_g(V_y)\leq 3$, then Chen-Hacon \cite[Proposition 2.1]{Chen-Hacon} implies $\chi(\omega_X)\leq \frac{5}{4}p_g(X)$.
Provided that $p_g(X)\geq 111$, one has:
$$(g(C)-1)(p_g(X)-2)\leq K_X^3\leq 72\cdot \frac{5}{4}p_g(X).$$
So, by calculation, one has $g(C)\leq 91$ when $p_g(X)\geq 183$.

{\it Case (2)}. If $V_y$ is a surface with $q(V_y)=0$ and $p_g(V_y)=1$, the canonical map $\Phi_{|K_V|}$ maps $V_y$ to a point, which contradicts to the assumption $\dim \Gamma=2$. Thus this is an impossible case.

{\it Case (3)}. If $V_y$ is a surface with $q(V_y)=0$ and $p_g(V_y)=2$, the assumption $\dim\Gamma=2$ implies that the natural restriction $j: H^0(V, K_V)\rightarrow H^0(V_y, K_{V_y})$ is surjective. This means $\Phi_{|K_V|}(V_y)=\mathbb{P}^1$. In other  words, ${\varphi_1}|_{V_y}=\Phi_{|K_{V_y}|}$. Taking further birational modification to $X'$ and $V$, we may assume $X'=V$ and the relative canonical map of $\text{Alb}_V$ is a morphism. Thus $C$, as a general fiber of $f$, is nothing but a generic irreducible element of the movable part of $|K_{V_y}|$. By the result of Beauville \cite{Beauville}, one easily gets, at worst, $g(C)\leq K_{V_y}^2+1\leq 9\chi(\OO_{V_y})+1\leq 28$.

{\it Case (4)}. If $V_y$ is a surface with $q(V_y)=0$ and $p_g(V_y)=3$, we will discuss it in details. In fact, $|K_{V_y}|$ can be either composed with a pencil of curves or not. We may still investigate the natural map $j$. Note that $\dim \text{Im}(j)\geq 2$. First, we consider the case $\dim\text{Im}(j)=2$. Then $|K_V||_{V_y}$ is a sub-pencil of $|K_{V_y}|$. Clearly the generic irreducible element of the movable part of $|K_V||_{V_y}$ is numerically equivalent to $C$, a general fiber of $f$ contained in $V_y$.  Denote by $V_{y,0}$ the minimal model of $V_y$. Noting that $\chi(\OO_{V_{y,0}})\leq 4$, one has
$$2g(C)-2\leq 2K_{V_{y,0}}^2\leq 18\chi(\OO_{V_{y,0}}),$$
which yields $g(C)\leq 37$. Next, we consider the case $\dim\text{Im}(j)=3$. This says $j$ is surjective. So $\Phi_{|K_V|}|_{V_y}=\Phi_{|K_{V_y}|}$. When $|K_{V_y}|$ is composed with a pencil, then the general fiber $C$ of $f$ contained in $V_y$ is nothing but a generic irreducible element in the movable part of $|K_{V,y}|$. By Beauville \cite{Beauville}, one has $g(C)\leq 36$. When $\dim\Phi_{|K_V|}(V_y)=2$, we hope to prove $\dim\varphi_1(V)=3$ which contradicts to the assumption $\dim\Gamma=2$. In fact, denote by $a:V\rightarrow Y$ the induced fibration after the Stein factorization of the Albanese map of $V$. Note that $g(Y)\geq q(V)\geq 3$. Replace $a$ by a relative minimal model $\hat{a}: Z\rightarrow Y$. Let $Z_{y}$ be a general fiber of $\hat{a}$. According to Fujita \cite{Fujita}, one knows $\hat{a}_*\omega_{Z/Y}$ is semi-positive. The Riemann-Roch on $Y$ gives
$$h^0(Y, \hat{a}_*\omega_Z\otimes \OO_Y(-y))\geq 3(g(Y)-2)>0.$$

Take a general point $y_1\in Y$. Since $|K_Z-Z_{y_1}|\neq\emptyset$, we may take a non-zero section $s_1\in H^0(Z, \omega_Z)$ such that $s_1$ vanishes along $Z_{y_1}$. Write $\text{div}{(s_1)}=Z_{y_1}+G_1$ with $G_1>0$. Pick another general fiber $Z_{y_2}$ such that $Z_{y_2}\not\leq Z_{y_1}+G_2$. Then $s_1$ does not vanish along $Z_{y_2}$. For the same reason, we may find another non-zero section $s_2\in H^0(Z, \omega_Z)$ such that $s_2$ vanishes along $Z_{y_2}$. This already implies
$\Phi_{|K_Z|}(Z_{y_1})\neq \Phi_{|K_Z|}(Z_{y_2})$. Thus $\dim \overline{\Phi_{|K_Z|}(Z)}>\dim \overline{\Phi_{|K_Z|}(Z_{y_1})}=2$, a contradiction. We have proved the theorem.
\end{proof}

\section{\bf The canonical family of surfaces}

In this section we assume $\dim\Gamma=1$. Essentially we will study the case when $p_g(F)$ is large. Recall that Chen \cite[Theorem 1]{JMSJ} implies $b=g(\Gamma)\leq 1$ whenever $p_g(F)>2$. {}First we will deduce a very delicate inequality of Noether type.

\begin{prop}\label{ep} Let $X$ be a Gorenstein minimal projective 3-fold of general type. Assume $X$ is canonically fibred by surfaces $F$ with $p_g(F)>2$. Then
{\small $$K_{X}^3\geq \begin{cases} (K_{F_0}^2+\frac{1}{4(20K_{F_0}^2+1)})p_g(X), &\ \ \ \text{when}\ \ b=1;\\
(K_{F_0}^2+\frac{1}{4(20K_{F_0}^2+1)})(p_g(X)-1)-\frac{4K_{F_0}^2}{2(20K_{F_0}^2+1)},&\begin{matrix}
&\text{when}\ b=0&\text{and}\\&p_g(X)\geq 56.& \end{matrix}
\end{cases}$$}
\end{prop}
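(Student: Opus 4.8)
The plan is to prove Proposition \ref{ep} by exploiting the fibration $f:X'\to\Gamma$ with $\dim\Gamma=1$ together with a carefully chosen movable system on a generic fibre $F$. Since $p_g(F)>2$, the earlier result of Chen \cite{JMSJ} gives $b=g(\Gamma)\le 1$, so there are exactly two cases. Set $F_0$ to be the minimal model of the smooth surface $F$; on $F_0$ (hence on $F$, via pullback) there is a pencil $|G|$ of curves whose generic irreducible element $C$ satisfies, by the Noether-type inequality on surfaces, $2g(C)-2\le 2K_{F_0}^2$, and $G\equiv \beta C$ with $\beta\ge 1$. Actually the quantitative improvement in the statement, the denominator $4(20K_{F_0}^2+1)$, suggests one must use the standard spectral-sequence/Kodaira-lemma packaging (as in \ref{ineq1} and \ref{ineq2}) to bound $\xi=\pi^*(K_X).C$ from below in terms of $K_{F_0}^2$ with explicit control of the auxiliary constants.

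The key steps, in order, are as follows. First I would invoke \ref{ineq1}: choosing the movable system $|G|$ on the generic fibre $S=F$ of $f$ (so $p=a_1\ge p_g(X)-1$ in the notation of \ref{set}), Kodaira's lemma yields a rational $\beta$ with $\pi^*(K_X)|_S\ge_{\bQ}\beta C$ and the volume inequality $K_X^3\ge p\,\beta\,\xi$. Second, I would bound $\xi$ below via \ref{ineq2}, $\xi\ge (2g(C)-2)/(1+1/p+1/\beta)$, and combine. Third — and this is where the specific constant $20K_{F_0}^2+1$ enters — one optimizes over the choice of $|G|$: taking $|G|$ to be (the movable part of) a suitable multiple of the canonical system of $F_0$, or a pencil realizing $g(C)$ close to $K_{F_0}^2+1$, one shows $\beta$ can be taken $\ge$ something like $\tfrac{1}{K_{F_0}^2}$ while $\xi\ge 1$, producing the extra summand $\tfrac{1}{4(20K_{F_0}^2+1)}$ after the arithmetic. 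Fourth, I would separate the two cases for $b$: when $b=1$, the base $\Gamma$ is elliptic, $a_1\ge p_g(X)$ (no $-1$ loss because $h^0$ of a line bundle of positive degree on an elliptic curve equals the degree), and one gets the clean bound $(K_{F_0}^2+\cdots)p_g(X)$; when $b=0$, $\Gamma=\bP^1$, so $a_1\ge p_g(X)-1$ and one additionally needs $p_g(X)\ge 56$ to absorb boundary/error terms, yielding the $(p_g(X)-1)$ form with the subtracted correction $\tfrac{4K_{F_0}^2}{2(20K_{F_0}^2+1)}$.

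I would organize the argument so that the surface-theoretic inputs (Noether inequality $K_{F_0}^2\ge 2p_g(F_0)-4$, the structure of pencils on surfaces of general type, bounds on $g(C)$ for a pencil on $F_0$) are cited as black boxes, and only the 3-fold bookkeeping — tracking $p$, $\beta$, $\xi$ through (\ref{vol}) and (\ref{22}) and rounding integers — is done explicitly. The condition $p_g(X)\ge 56$ in the $b=0$ case should come out of requiring the corrective terms to have the right sign; I would derive the precise threshold at the end rather than carrying it throughout.

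The main obstacle I anticipate is the optimization in the third step: extracting the exact denominator $4(20K_{F_0}^2+1)$ requires choosing $|G|$ (equivalently, choosing the pair $(\beta, g(C))$) to simultaneously make $\beta$ large and $\xi$ large, and these pull against each other. One likely needs to run two sub-cases — one where a low-genus pencil with large $\beta$ is available on $F_0$, and one where only $|K_{F_0}|$-type systems are available — and check that in both the resulting coefficient of $p_g(X)$ dominates $K_{F_0}^2+\tfrac{1}{4(20K_{F_0}^2+1)}$. Getting these sub-cases to meet cleanly at the stated constant, rather than at some messier expression, will be the delicate part; everything else is routine manipulation of (\ref{vol}) and (\ref{22}).
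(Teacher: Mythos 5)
Your proposal has a genuine structural gap: the mechanism you rely on cannot produce the improvement claimed in the statement. The leading term $K_{F_0}^2\,p_g(X)$ already follows from the trivial estimate $K_X^3\geq a_1\,\pi^*(K_X)^2.F$, and the whole content of the proposition is the \emph{extra} positive summand $\tfrac{1}{4(20K_{F_0}^2+1)}$ in the coefficient of $p_g(X)$. But the route through (\ref{vol}) and (\ref{22}) applied to a movable system $|G|$ on the fibre $F$ is bounded above by that trivial estimate: if $\pi^*(K_X)|_F\geq\beta C$ with $\pi^*(K_X)|_F$ nef, then $\beta\xi=\beta\,(\pi^*(K_X)|_F).C\leq(\pi^*(K_X)|_F)^2\leq K_{F_0}^2$, so $p\beta\xi\leq a_1K_{F_0}^2$ no matter how you ``optimize'' the pair $(\beta,g(C))$. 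The two sub-cases you anticipate in your third step therefore cannot meet at the stated constant; they can only recover (at best) the leading term. The correction term does not come from any curve family on the fibre.

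What the paper actually does is quite different. First, one reduces to a factorial minimal model and shows $K_X.N^2=0$ (where $N=\pi_*(F)$): automatic for $b=1$, and for $b=0$ forced by Miyaoka--Yau once $p_g(X)\geq 56$ --- this, not ``absorbing error terms,'' is the origin of the threshold $56$, and it is also what guarantees $\pi^*(K_X)|_F=\sigma^*(K_{F_0})$, which you need even for the leading term. Second, the extra summand is extracted from the \emph{horizontal part of the fixed divisor}: one takes a general member $S_4\in|4K_X|$ (base point free by S.~Lee), which carries an induced fibration over $\Gamma$, writes $K_X|_{S_4}\equiv a_1C+G_v+\sum m_iG_i$, and proves via an auxiliary divisor $D=kK_{S_4}+2k(1-b)C+G_v+\sum m_iG_i$ (with $k=4K_{F_0}^2$) that $D.G_i\geq 0$ for every horizontal component $G_i$. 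Combined with adjunction $K_{S_4}=5K_X|_{S_4}$ and $\sum m_i\leq 4K_{F_0}^2$, this yields $(5k+1)(K_X|_{S_4}.G_h)\geq a_1-2k(1-b)$, and expanding $K_X^3=\tfrac14(K_X|_{S_4})^2$ gives exactly the denominators $4(20K_{F_0}^2+1)=k(5k+1)/K_{F_0}^2$ and the $b$-dependent correction. Your observation that $a_1\geq p_g(X)$ when $b=1$ is correct and is indeed used, but without the $|4K_X|$ restriction and the positivity of $K_X|_{S_4}.G_h$ your argument cannot reach the claimed inequality.
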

\begin{proof} The property we are discussing here is birationally invariant. For technical reason, we need to choose a suitable minimal model. According to Kawamata \cite[Lemma 5.1]{Ka}, any Gorenstein minimal 3-fold is birational to a factorial Gorenstein minimal model with at worst terminal singularities. So we may assume that $X$ is factorial.  We keep the same notation as in \ref{set}. We have an induced fibration $f:X'\rightarrow \Gamma$ onto the smooth curve $\Gamma$.

Pick a general fiber $F$ of $f$. Set $N:=\pi_*(F)$ and
$Z:=\pi_*(E')$. Then we have $$K_X\equiv a_1N+Z.$$ As one knows
(see, for instance, Chen-Chen-Zhang \cite[2.2]{Crelle}), $K_X.N^2$
is a non-negative even integer.

In case $b=1$, automatically $K_X.N^2=0$ since the movable part of $|K_X|$ is base point free.

In case $b=0$ and $K_X.N^2>0$ (which means $K_X.N^2\geq 2$), one has
$$K_X^3\geq {a_1}K_X^2.N\geq 2{a_1}^2\geq 2(p_g(X)-1)^2.$$
Noting that $\chi(\omega_X)\leq \frac{3}{2}p_g(X)$ by Chen-Hacon \cite[2.2(2)]{Chen-Hacon}, the Miyaoka-Yau inequality implies $p_g(X)\leq 55$. In other words, $K_X.N^2=0$ whenever $p_g(X)\geq 56$.

Now we can work under the assumption $K_X.N^2=0$. Then one has $\pi^*(K_X)|_F=\sigma^*(K_{F_0})$ by Chen-Chen-Zhang \cite[Claim 3.3]{Crelle}. Therefore
$$K_X^2.N=\pi^*(K_X)^2.F=(\pi^*(K_X)|_F)^2=\sigma^*(K_{F_0})^2=K_{F_0}^2.$$

Recall that we are studying on the factorial minimal model $X$. According to Lee \cite{SLee}, $|4K_X|$ is base point free. Take a general member $S_4$
of $|4K_X|$.  Then $S_4$ is a nonsingular projective surface of general type. We hope to do some delicate calculation on $S_4$. Clearly $f(\pi^*(S_4))=\Gamma$, which also means that $S$ has a natural fibration structure since $(N|_{S_4})^2=4K_X.N^2=0$. Taking the restriction, one has:
$${K_X}|_{S_4}\equiv {a_1} N|_{S_4}+Z|_{S_4}.$$
The fact $(N|_{S_4})^2=0$ also implies that $f|_{\pi^*(S_4)}$ factors through $S_4$, i.e. there is a fibration $\nu:S_4\rightarrow \Gamma$. Since $S_4$ and $F$ are both general, one sees that $N|_{S_4}=\pi(F|_{\pi^*(S_4)})$ is irreducible and reduced and is exactly a general fiber of $\nu$.

By abuse of notation, we set $C:=N|_{S_4}$ and $G:=Z|_{S_4}$. Write $G:=G_v+G_h$ where $G_v$ is the vertical part while $G_h:=\sum_{i}m_iG_i$ ($m_i>0$) is the horizontal part. Then we have
$${K_X}|_{S_4}\equiv {a_1} C+G_v+\sum m_i G_i.$$
Note that $C$ is nef, $C^2=0$, $C.G_i>0$ for all $i$ and $p_a(G_i)\geq g(\Gamma)$. One has
\begin{eqnarray*}
4K_{F_0}^2&=&4(\pi^*(K_X)|_F)^2=4K_X^2.N\\
&=&\sum m_i(C.G_i)\geq \sum m_i.
\end{eqnarray*}
Write $k:=4K_{F_0}^2$. For each $i$, we have
$$K_{S_4}.G_i+G_i^2=2p_a(G_i)-2\geq 2b-2=2g(\Gamma)-2.$$

On $S_4$, take the divisor $D:=kK_{S_4}+2k(1-b)C+G_v+\sum m_i G_i$.
For each $i$, one has
\begin{eqnarray*}
D.G_i&\geq& k(K_{S_4}.G_i)+2k(1-b)(C.G_i)+m_i G_i^2\\
&\geq& m_i (K_{S_4}.G_i)+2m_i(1-b)(C.G_i)+m_i(2b-2-K_{S_4}.G_i)\\
&=& 2m_i((1-b)(C.G_i)+b-1)\geq 0.
\end{eqnarray*}
Thus $D.G_h\geq 0$. Explicitly,
$$D.G_h=k(K_{S_4}.G_h)+2k(1-b)(C.G_h)+G_v.G_h+G_h^2\geq 0.$$
Since
 $${K_X}|_{S_4}.G_h={a_1}(C.G_h)+G_v.G_h+G_h^2$$
and $K_{S_4}=(K_X+{S_4})|_{S_4}={5K_X}|_{S_4}$, by summing up together, one gets
$$(5k+1)({K_X}|_{S_4}.G_h)\geq ({a_1}-2k(1-b))(C.G_h)\geq {a_1}-2k(1-b).$$
Finally, one has
\begin{eqnarray*}
K_X^3&=& \frac{1}{4}(K_X|_{S_4})^2\geq \frac{1}{4}{a_1}(K_X|_{S_4}.N|_{S_4})+\frac{1}{4}(K_X|_{S_4}.G_h)\\
&\geq& (1+\frac{1}{k(5k+1)}){a_1}K_{F_0}^2-\frac{k}{2(5k+1)}(1-b)
\end{eqnarray*}
which yields the statements of this proposition.
\end{proof}

\begin{thm}\label{T2} Let $X$ be a Gorenstein minimal projective 3-fold of general type. Assume that $X$ is canonicaly fibred by surfaces. Let $f:X'\rightarrow \Gamma$ be an induced fibration and denote by $F$ a general fiber. Then $p_g(F)\leq 37$ when either $b=g(\Gamma)>0$ or $b=0$ and $p_g(X)\gg 0$, say $p_g(X)\geq 3890$.
\end{thm}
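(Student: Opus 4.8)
The plan is to combine the Noether-type inequality of Proposition~\ref{ep} with the Miyaoka--Yau inequality $K_X^3\leq 72\chi(\omega_X)$, playing the linear lower bound in $p_g(X)$ against an essentially linear upper bound $\chi(\omega_X)\lesssim c\, p_g(X)$, and then extract a bound on $K_{F_0}^2$ (hence on $p_g(F)$). First I would record that, under the standing hypothesis $p_g(F)>2$, Chen \cite{JMSJ} already forces $b=g(\Gamma)\leq 1$, so the two cases $b=1$ and $b=0$ of Proposition~\ref{ep} exhaust everything. In the $b=1$ case the inequality reads $K_X^3\geq (K_{F_0}^2+\varepsilon)p_g(X)$ with $\varepsilon=\tfrac1{4(20K_{F_0}^2+1)}>0$, and combined with Miyaoka--Yau and an upper bound $\chi(\omega_X)\leq C p_g(X)$ (valid, e.g., with $C=\tfrac54$ by Chen--Hacon \cite{Chen-Hacon}, possibly after ruling out the exceptional surface-fiber configurations exactly as in the proof of Theorem~\ref{2}), we get $(K_{F_0}^2+\varepsilon)p_g(X)\leq 72C\, p_g(X)$, so $K_{F_0}^2<72C\leq 90$. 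This bounds $K_{F_0}^2$ by an absolute constant independent of $p_g(X)$; then the Noether inequality on the minimal surface $F_0$, i.e. $p_g(F)\leq \tfrac12 K_{F_0}^2+2$, converts this into an absolute bound on $p_g(F)$, which one then sharpens to $37$ by the finer surface-geography estimates (and by noting $\varepsilon$ is actually not negligible once $K_{F_0}^2$ is near the threshold).

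For the case $b=0$ the argument is the same in spirit but requires $p_g(X)$ large: Proposition~\ref{ep} only applies for $p_g(X)\geq 56$ and carries the affine correction terms $-(K_{F_0}^2+\varepsilon)-\tfrac{4K_{F_0}^2}{2(20K_{F_0}^2+1)}$, so I would rewrite the inequality as $K_X^3\geq (K_{F_0}^2+\varepsilon)(p_g(X)-1)-2K_{F_0}^2$ and absorb the constant terms by demanding $p_g(X)$ large. Here I must also control $\chi(\omega_X)$: in the $b=0$ case one has $\chi(\omega_X)\leq \tfrac32 p_g(X)$ by Chen--Hacon \cite[2.2(2)]{Chen-Hacon}, so Miyaoka--Yau gives $K_X^3\leq 108\, p_g(X)$, whence $(K_{F_0}^2+\varepsilon)(p_g(X)-1)\leq 108\,p_g(X)+2K_{F_0}^2$, and for $p_g(X)$ sufficiently large this forces $K_{F_0}^2+\varepsilon$ to be essentially at most $108$, i.e. $K_{F_0}^2\leq 108$ (and in fact strictly less once the correction term and the $\varepsilon$-gap are tracked carefully). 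The explicit threshold $p_g(X)\geq 3890$ comes out of making every ``sufficiently large'' quantitative: one solves the inequality $(K_{F_0}^2+\varepsilon)(p_g(X)-1)-2K_{F_0}^2> 108\,p_g(X)$ for the borderline value $K_{F_0}^2$ corresponding to $p_g(F)=38$ and checks it fails, which pins down the numerical constant.

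The translation $K_{F_0}^2\leq 108$ (or whatever absolute bound emerges) into $p_g(F)\leq 37$ is where the surface geography must be used sharply rather than crudely: Noether's inequality $K_{F_0}^2\geq 2p_g(F)-4$ alone would only give $p_g(F)\leq 56$, so to reach $37$ one instead argues that if $p_g(F)$ were large then $F_0$ lies near the Noether line, where the structure of such surfaces is well understood (they are fibred over $\mathbb{P}^1$ by genus-$2$ curves or are $(2,3)$ or $(2,4,5)$ weighted hypersurfaces), and one re-runs the volume estimate \eqref{vol}--\eqref{22} on $X$ with $C$ a fiber of this genus-$2$ pencil on $F_0$, getting a second, competing lower bound on $K_X^3$ that, against Miyaoka--Yau, tightens the allowed range of $p_g(F)$ down to $37$. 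The main obstacle, as in Theorem~\ref{2}, is the bookkeeping of the exceptional fiber configurations (surface fibers $V_y$ with $q=0$ and small $p_g$, and the Albanese-dimension-$\geq 3$ subcase) needed to legitimately use $\chi(\omega_X)\leq \tfrac54 p_g(X)$ rather than a weaker constant; once those are dispatched exactly as in the proof of Theorem~\ref{2}, the inequality-chasing to the constant $37$ and the threshold $3890$ is routine but delicate arithmetic.
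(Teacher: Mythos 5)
Your overall skeleton (Proposition \ref{ep} against Miyaoka--Yau, then converting a bound on $K_{F_0}^2$ into a bound on $p_g(F)$) is the paper's skeleton, but the constants you feed into it are too weak, and the devices you invoke to close the resulting gap do not work. For $b=1$ you use $\chi(\omega_X)\leq \frac{5}{4}p_g(X)$, which only yields $K_{F_0}^2<90$ and hence, via Noether, $p_g(F)\leq 46$; the tiny term $\frac{1}{4(20K_{F_0}^2+1)}$ is of order $10^{-4}$ and cannot ``sharpen'' $90$ down to $72$, so the claimed improvement to $37$ is unsupported. The paper's actual key step here is different and essential: since $\Gamma$ is elliptic, $f_*\omega_{X'}=f_*\omega_{X'/\Gamma}$ and $R^1f_*\omega_{X'}$ are semi-positive (Fujita, Kawamata, Koll\'ar, Nakayama, Viehweg) and $R^2f_*\omega_{X'}\cong\OO_\Gamma$, so $\chi(\omega_{X'})\leq \chi(f_*\omega_{X'})\leq p_g(X)$, i.e.\ the constant is $1$, not $\frac54$; then $(K_{F_0}^2+\varepsilon)p_g(X)\leq 72\,p_g(X)$ gives $K_{F_0}^2<72$ and Noether gives $p_g(F)\leq 37$ directly. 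The exceptional Albanese-fiber cases of Theorem \ref{2} that you propose to ``dispatch'' are irrelevant to this case and do not help you recover the constant $1$.

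For $b=0$ the same problem recurs: with $\chi(\omega_X)\leq\frac32 p_g(X)$ you can only reach $K_{F_0}^2\leq 108$, hence $p_g(F)\leq 56$ by Noether, and your proposed rescue --- that $p_g(F)\geq 38$ would force $F_0$ to lie ``near the Noether line,'' hence carry a genus-$2$ pencil to be fed back into (\ref{vol})--(\ref{22}) --- is not valid: $K_{F_0}^2\leq 108$ together with $p_g(F)\geq 38$ allows, e.g., $K_{F_0}^2=100$, $p_g(F)=40$, which is nowhere near the Noether line, so no such structural classification applies, and no argument is given that the extra pencil would improve the bound anyway. The paper instead uses the finer estimate $\chi(\omega_X)\leq p_g(X)+q(F)-1$ and splits on $q(F)$: if $q(F)>0$, Debarre's inequality $K_{F_0}^2\geq 2p_g(F)$ both controls $q(F)\leq p_g(F)\leq \frac12 K_{F_0}^2$ (so the $q(F)$ contribution enters as $36K_{F_0}^2/(p_g(X)-1)$, absorbed for $p_g(X)\geq 3890$) and converts $K_{F_0}^2\leq 72$ into $p_g(F)\leq 36$; if $q(F)=0$, then $\chi(\omega_X)\leq p_g(X)-1$ and already $p_g(X)\geq 865$ forces $K_{F_0}^2<72$, hence $p_g(F)\leq 37$ by Noether. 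Without these two inputs (the $\chi\leq p_g$ bound over an elliptic base, and the $p_g+q(F)-1$ bound plus Debarre for $b=0$), your argument does not reach $37$, so the proposal has a genuine gap rather than being a complete alternative route.
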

\begin{proof} If $b>1$, then $p_g(F)\leq 2$ by Chen \cite[Theorem 1]{JMSJ}.

If $b=1$, we know that $f_*\omega_{X'}=f_*\omega_{X'/\Gamma}$ and $R^1f_*\omega_{X'}=R^1f_*\omega_{X'/\Gamma}$
are both semi-positive (see Fujita \cite{Fujita}, Kawamata \cite{KaP}, Koll\'ar \cite{Kol}, Nakayama \cite{N} and Viehweg \cite{VP}). Furthermore $R^2f_*\omega_{X'}=\omega_{\Gamma}\cong \OO_{\Gamma}$.
This implies $\chi(R^1f_*\omega_{X'})\geq 0$ and $\chi(R^2f_*\omega_{X'})=0$.
Thus
$$\chi(\omega_{X'})=\chi(f_*\omega_{X'})-\chi(R^1f_*\omega_{X'})+\chi(R^2f_*\omega_{X'})\leq \chi(f_*\omega_{X'})\leq p_g(X).$$
So Proposition \ref{ep} and Miyaoka-Yau inequality imply
$$(K_{F_0}^2+\frac{1}{4(20K_{F_0}^2+1)})p_g(X)\leq 72p_g(X)$$
which directly gives $K_{F_0}^2<72$, whence $p_g(F)\leq 37$ by the Neother inequality $K_{F_0}^2\geq 2p_g(F)-4$.

If $b=0$, we assume $p_g(X)\geq 56$. Clearly
$$\chi(\omega_X)\leq p_g(X)+q(F)-1.$$
A preliminary estimation in Chen-Hacon \cite[2.2(2)]{Chen-Hacon} gives $K_{F_0}^2 \leq 108$ when $p_g(X)\geq 327$. We shall discuss by distinguishing the value of $q(F)$.

{\it Case (1)}. $q(F)>0$. By Debarre \cite{D}, one has $K_{F_0}^2\geq 2p_g(F)$. So Proposition \ref{ep} and Miyaoka-Yau inequality implies
$$K_{F_0}^2\leq 72-\frac{1}{4(20K_{F_0}^2+1)}+(36K_{F_0}^2+\frac{2K_{F_0}^2}{20K_{F_0}^2+1})\cdot \frac{1}{p_g(X)-1}.$$
The calculation shows $K_{F_0}^2\leq 72$ (whence $p_g(F)\leq 36$) if
$p_g(X)\geq 3890$; and $K_{F_0}^2\leq 71$ (whence $p_g(F)\leq 35$)
if $p_g(X)\geq 33616518$.

{\it Case (2)}. $q(F)=0$. Similarly, we have
$$K_{F_0}^2\leq 72-\frac{1}{4(20K_{F_0}^2+1)}+\frac{2K_{F_0}^2}{20K_{F_0}^2+1}\cdot \frac{1}{p_g(X)-1}.$$
At least, when $p_g(X)\geq 865$, one has $K_{F_0}^2<72$ or $p_g(F)\leq 37$.

To make the conclusion, when $p_g(X)\geq 3890$ and $b=0$, one gets $p_g(F)\leq 37$.
\end{proof}

Theorem \ref{2} and Theorem \ref{T2} imply Theorem \ref{main}.

\section{\bf New examples of canonically fibred 3-folds }

 An equally important task of birational classification is to provide supporting examples. One may refer to Chen \cite{PAMS} and Chen-Hacon \cite{Chen-Hacon} for some examples where the largest known value of $g(F)$ (resp. $p_g(F)$) is 5. In fact, it has been an open problem to look for general type 3-folds canonically fibred by curves (resp. surfaces) with invariant of general fiber as large as possible. In this section we would like to present some new examples.

\begin{setup}\label{standard}{\bf Standard construction}.

Let $S$ be a minimal projective surface of general type with $p_g(S)=0$. Assume there exists a divisor $H$ on $S$ such that $|K_S+H|$ is composed with a pencil of curves and that $2H$ is linearly equivalent to a smooth divisor $R$. The existence of such pair $(S,H)$ is secured by Lemma \ref{s}. Let $\hat{C}$ be a generic irreducible element of the movable part of $|K_S+H|$. Assume $\hat{C}$ is smooth. Set $d:=\hat{C}.H$ and $D:=\hat{C}\bigcap H$.

Let $C_0$ be a fixed smooth projective curve of genus 2. Let $\theta$ be a 2-torsion divisor on $C_0$. Set $Y:=S\times C_0$. Denote by $p_1:Y\rightarrow S$, $p_2:Y\rightarrow C_0$ the two projections. Take $\delta:=p_1^*(H)+p_2^*(\theta)$ and pick a smooth divisor $\Delta\sim p_1^*(2H)$. Then the pair $(\delta,\Delta)$ determines a smooth double covering $\pi:X\rightarrow Y$ and $K_X=\pi^*(K_Y+\delta)$. Clearly $X$ is smooth, minimal and of general type.

Since $K_Y+\delta=p_1^*(K_S+H)+p_2^*(K_{C_0}+\theta)$, $p_g(Y)=0$ and $h^0(K_{C_0}+\theta)=1$, one sees that $|K_X|=\pi^*|K_Y+\delta|$ and that $\Phi_{|K_X|}$ factors through $\pi$, $p_1$ and $\Phi_{|K_S+H|}$. Since $|K_S+H|$ is composed with a pencil of curves $\hat{C}$, $X$ is canonically fibred by surfaces $F$ and $F$ is a double covering over $T:=\hat{C}\times C_0$ corresponding to the data $(q_1^*(D)+q_2^*(\theta), q_1^*(2D))$ where $q_1$ and $q_2$ are projections. Denote by $\sigma:F\rightarrow T$ the double covering. Then $K_F=\sigma^*(K_T+q_1^*(D)+q_2^*(\theta))$. By calculation, one has $p_g(F)=3g(\hat{C})$ when $d=0$ and $p_g(F)=3g(\hat{C})+d-1$ whenever $d>0$.

The 3-folds constructed in the above way will be denoted by $X_{S, p_g(F)}$ in the context.
\end{setup}

\begin{setup}\label{v}{\bf Variant---An Infinite Family}.

In the construction \ref{standard}, if we replace $C_0$ by any smooth curve $C_{\nu}$ of genus $\nu\geq 3$, what we obtain is a smooth minimal 3-fold $X$ canonically fibred by curves. In fact, since $h^0(K_{C_0}+\theta)>1$, $\Phi_{|K_X|}$ factors through $\pi$ and $p_1\times p_2$. Thus a generic irreducible component in the fibres of $\Phi_{|K_X|}$ is simply a double covering $\tau: F\rightarrow \hat{C}$ branched along the divisor $2D$. Thus Hurwitz formula gives
$$2g(F)-2=2(2g(\hat{C})-2)+2d$$
and hence $g(F)=2g(\hat{C})+d-1$.

Clearly $p_g(X)$ can be arbitrarily large as long as $\nu=g(C_{\nu})$ is large. So such kind of 3-folds $X$ form an infinite family. We denote these 3-folds by $X_{C, \nu, g(F)}$.
\end{setup}

\begin{lem}\label{s} Let $S$ be any smooth minimal projective surface of general type with $p_g(S)=0$. Assume $\mu:S\rightarrow \bP^1$ is a genus 2 fibration. Let $H$ be a general fiber of $\mu$. Then $|K_S+H|$ is composed with a pencil of curves $\hat{C}$ of genus $g(\hat{C})$ and $\hat{C}.H=2$.
\end{lem}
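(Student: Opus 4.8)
The plan is to determine $h^0(S,K_S+H)$ and the behaviour of $|K_S+H|$ along $H$ via the adjunction sequence, and then to read off the intersection number $\hat C\cdot H$.

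\emph{Numerical data and the key exact sequence.} Since $S$ is a minimal surface of general type with $p_g(S)=0$, we have $\chi(\OO_S)=1-q(S)+p_g(S)\ge 1$, hence $q(S)=0$ and $H^0(S,\omega_S)=H^1(S,\omega_S)=0$. As $H$ is a general fibre of the genus $2$ fibration $\mu$, it is a smooth curve with $g(H)=2$ and $H^2=0$, so by adjunction $K_S\cdot H=2g(H)-2-H^2=2$ and $(K_S+H)|_H=K_H$. Riemann--Roch gives $\chi(S,\omega_S(H))=\chi(\OO_S)+\frac12(K_S+H)\cdot H=2$. Taking cohomology in
$$0\lra\omega_S\lra\omega_S(H)\lra\omega_H\lra 0$$
and using $H^0(\omega_S)=H^1(\omega_S)=0$, the restriction map $H^0(S,\omega_S(H))\to H^0(H,\omega_H)$ is an isomorphism, so $h^0(S,K_S+H)=h^0(H,K_H)=2$.

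\emph{$|K_S+H|$ is composed with a pencil.} Since $h^0(S,K_S+H)=2$, the image $\overline{\Phi_{|K_S+H|}(S)}$ has dimension at most $1$. On the other hand, the isomorphism above shows that $\Phi_{|K_S+H|}$ restricts on $H$ to the complete canonical map of the genus $2$ curve $H$, i.e. to the hyperelliptic double cover $H\to\bP^1$ defined by the $g^1_2=|K_H|$; in particular $\Phi_{|K_S+H|}(H)$ is a curve, so $\overline{\Phi_{|K_S+H|}(S)}$ is a curve. Hence $|K_S+H|$ is composed with a pencil. Writing $|M|$ for its movable part (which has only finitely many base points, $|M|$ being without fixed components on a surface) and letting $\hat C$ be a generic irreducible element of $|M|$, the curve $\hat C$ is a general fibre of the induced fibration $S\dashrightarrow\bP^1$, of some genus $g(\hat C)$, and is smooth away from those base points by Bertini.

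\emph{The intersection number.} Write $|K_S+H|=|M|+V$ with $V$ the fixed part. For a general fibre $H$ of $\mu$, $H$ is not contained in $V$ and avoids the base points of $|M|$, so restriction gives $|K_S+H|\big|_H=|M|\big|_H+V|_H$ as linear systems on $H$. By the isomorphism of the first step this restricted system is the complete system $|K_H|$, which is base point free and hence has no fixed part; this forces $V|_H=0$ and $|M|\big|_H=|K_H|$. Therefore $\hat C\cdot H=M\cdot H=(K_S+H)\cdot H-V\cdot H=2$, as claimed (equivalently, $\hat C\cdot H$ is the degree of the restricted hyperelliptic map, namely $2$).

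The main obstacle I expect is the bookkeeping in the first and last steps: one must use genuinely that $q(S)=0$ (which itself rests on the classical fact that minimal surfaces of general type have $\chi(\OO_S)\ge 1$) in order to kill $H^1(\omega_S)$, and one must know that for a general $H$ the restriction of the movable part $|M|$ coincides with the movable part of the restricted system $|K_S+H|\big|_H$ — this is exactly what converts the completeness and base point freeness of $|K_H|$ into the vanishing $V\cdot H=0$ and hence into $\hat C\cdot H=2$.
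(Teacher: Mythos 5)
Your computation of $h^0(S,K_S+H)=2$ and of the restricted system on $H$ is correct and is essentially the paper's own argument: the paper invokes Ramanujam vanishing to get $H^1(S,K_S+H)=0$ and then separately uses $q(S)=0$ for the surjectivity of restriction, whereas your single adjunction sequence (using $H^0(\omega_S)=H^1(\omega_S)=0$, which indeed follows from $p_g=0$ and $\chi(\OO_S)\geq 1$) does both at once. The deductions that $|K_S+H|$ is composed with a pencil, that $V|_H=0$, and that $M\cdot H=2$ are all sound.

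The gap is the final equality $\hat{C}\cdot H=M\cdot H$. Since $|M|$ is composed with a pencil, its general member is $M=\hat{C}_1+\cdots+\hat{C}_a$, a sum of $a\geq 1$ fibers of the induced fibration $g:S\rightarrow B$, where $B\rightarrow\bP^1$ has degree $a$ in the Stein factorization; $a=1$ is not automatic for a one\mbox{-}dimensional linear system (this is exactly the distinction between a general member of $|M|$ and a generic irreducible element in the paper's Setup 2.1). From $M\cdot H=2$ you may only conclude $a\,(\hat{C}\cdot H)=2$, so the case $a=2$, $\hat{C}\cdot H=1$ must be excluded --- and the distinction matters, since $d=\hat{C}\cdot H$ feeds directly into the values of $p_g(F)$ and $g(F)$ in the examples. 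The exclusion is one line: your degree\mbox{-}$2$ map $\Phi_{|K_H|}:H\rightarrow\bP^1$ factors as $H\xrightarrow{\,g|_H\,}B\rightarrow\bP^1$ with $\deg(g|_H)=\hat{C}\cdot H$, so $\hat{C}\cdot H=1$ would force $H\cong B$ and hence $g(B)=2$, contradicting $g(B)\leq q(S)=0$. This is precisely the role of the paper's Clifford-type step giving $\hat{C}\cdot H\geq 2$ from $g(H)=2$. With that line added, your proof is complete.
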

\begin{proof} By Ramanujam's vanishing theorem \cite[P.131, Theorem 8.1]{BPV}, one has $H^1(S, K_S+H)=0$. Thus
$$h^0(S, K_S+H)=\frac{1}{2}(K_S+H)H+\chi(\OO_S)=g(H)=2$$
which gives that $|K_S+H|$ is naturally composed with a pencil of curves $\hat{C}$ of genus $g(\hat{C})$.

Since $q(S)\leq p_g(S)=0$, we have the surjective map
$$H^0(S,K_S+H)\rightarrow H^0(H, K_H).$$
This means that $|K_S+H|$ is composed with a different pencil from $|H|$. Thus $\mu(\hat{C})=\bP^1$. In other words, $|H||_{\hat{C}}$ is movable. The Riemann-Roch and the Clifford theorem simply imply $\hat{C}.H=2$ since $g(H)=2$.

{}Finally, whenever $K_S^2\geq 2$, Xiao \cite[Theorem 6.5]{X1137} proved that $S$ can not have two different pencils of genus 2 on $S$. We thus see $g(\hat{C})\geq 3$.
\end{proof}

We would like to look for those pairs $(S,H)$ satisfying the conditions of Lemma \ref{s}.

\begin{exmp}\label{S19}{\bf The 3-fold $X_{\text{S},19}$ which is canonically fibred by surfaces $F$ with $p_g(F)=19$.}

We take a pair $(S,H)$ which was found by Xiao \cite[P. 288]{X2}, where $S$ is a numerical Compedelli surface with $K_S^2=2$, $p_g(S)=q(S)=0$
and $\text{Tor}(S)=({\mathbb Z}_2)^3$. We need to recall the construction to determine the pencil $|\hat{C}|$ on $S$.

To start the construction, let $P=\bP^1\times \bP^1$. Denote by $x,\ y\in\bP^1=\bC\cup\{\infty\}$ the two coordinates of those points in $P$. Take four curves $C_1$, $C_2$, $C_3$ and $C_4$ defined by the following equations, respectively:
\begin{eqnarray*}
&C_1: &x=y;\\
&C_2: &x=-y;\\
&C_3: &xy=1;\\
&C_4: &xy=-1.
\end{eqnarray*}
These four curves intersect mutually at 12 ordinary double points:
\begin{eqnarray*}
& (0,0),\ (\infty,\infty),\ (0,\infty),\ (\infty,0)\\
&(\pm1,\pm1),\ (\pm \sqrt{-1}, \pm \sqrt{-1}).
\end{eqnarray*}

\begin{center}
\begin{picture}(290,140)
\put(10,0){\line(0,1){120}}
\put(45,0){\line(0,1){120}}
\put(80,0){\line(0,1){120}}
\put(145,0){\line(0,1){120}}
\put(210,0){\line(0,1){120}}
\put(245,0){\line(0,1){120}}

\put(0,10){\line(1,0){277}}
\put(0,30){\line(1,0){277}}
\put(0,50){\line(1,0){277}}
\put(0,70){\line(1,0){277}}
\put(0,90){\line(1,0){277}}
\put(0,110){\line(1,0){277}}

\put(10,10){\circle*{3}}
\put(13,13){\scriptsize{$(0,0)$}}
\put(10,110){\circle*{3}}
\put(13,113){\scriptsize{$(0,\infty)$}}

\put(45,30){\circle*{3}}
\put(48,33){\scriptsize{$(1,1)$}}
\put(45,90){\circle*{3}}
\put(48,93){\scriptsize{$(1,-1)$}}

\put(80,50){\circle*{3}}
\put(83,53){\scriptsize{$(\sqrt{-1},\sqrt{-1})$}}
\put(80,70){\circle*{3}}
\put(83,73){\scriptsize{$(\sqrt{-1},-\sqrt{-1})$}}

\put(145,50){\circle*{3}}
\put(148,53){\scriptsize{$(-\sqrt{-1},\sqrt{-1})$}}
\put(145,70){\circle*{3}}
\put(148,73){\scriptsize{$(-\sqrt{-1},-\sqrt{-1})$}}

\put(210,30){\circle*{3}}
\put(213,33){\scriptsize{$(-1,1)$}}
\put(210,90){\circle*{3}}
\put(213,93){\scriptsize{$(-1,-1)$}}

\put(245,10){\circle*{3}}
\put(248,13){\scriptsize{$(\infty,0)$}}
\put(245,110){\circle*{3}}
\put(248,113){\scriptsize{$(\infty,\infty)$}}

\put(7,-7){\scriptsize{$P_0$}}
\put(42,-7){\scriptsize{$P_1$}}
\put(77,-7){\scriptsize{$P_{\sqrt{-1}}$}}
\put(142,-7){\scriptsize{$P_{-\sqrt{-1}}$}}
\put(207,-7){\scriptsize{$P_{-1}$}}
\put(242,-7){\scriptsize{$P_\infty$}}

\put(280,7){\scriptsize{$Q_0$}}
\put(280,27){\scriptsize{$Q_1$}}
\put(280,47){\scriptsize{$Q_{\sqrt{-1}}$}}
\put(280,67){\scriptsize{$Q_{-\sqrt{-1}}$}}
\put(280,87){\scriptsize{$Q_{-1}$}}
\put(280,107){\scriptsize{$Q_{\infty}$}}
\end{picture}
\end{center}
\vskip0.4cm

Denote by $\varphi: P\rightarrow \bP^1$ be the first projection.
Let $F_1, \ldots, F_6$ be the fibers of $\varphi$ over $P_0$,
$P_{\infty}$, $P_1$, $P_{-1}$, $P_{\sqrt{-1}}$, $P_{-\sqrt{-1}}\in
\bP^1$, respectively. Note that each fiber $F_i$ contains exactly
two double points. Furthermore we take the following divisors of
bi-degree $(4,2)$:
\begin{eqnarray*}
&D_1=&C_1+C_2+F_1+F_2,\\
&D_2=&C_1+C_3+F_3+F_4,\\
&D_3=&C_2+C_3+F_5+F_6.
\end{eqnarray*}
Being linearly equivalent, passing through the above 12 points and having no common components, $D_1$, $D_2$ and $D_3$ generate a linear system which has the general smooth member $D$ with $D$ again passing through the 12 points. Let
\[R_1:=\sum_{i=1}^4 C_i+\sum_{i=1}^6 F_i+D\]
and $\delta_1$ be a divisor of bi-degree $(7,3)$ on $P$. Then the data $(\delta_1, R_1)$ determines a singular double covering onto $P$.
Noting that $R_1$ has exactly 12 singularities of multiplicity 4, we take the blowing up $\tau:\tilde{P}\rightarrow P$ resolving the 12 points. Then, on $\tilde{P}$, the strict transform $\tilde{R}_1:=\tau_*^{-1}(R_1)$ is smooth and the corresponding double covering $\theta:\tilde{S}\rightarrow \tilde{P}$ is smooth. Set $\tilde{f}:=\varphi\circ \tau\circ \theta$. Then we have a fibration $\tilde{f}: \tilde{S}\rightarrow \bP^1$. Clearly, on $\tilde{S}$, the strict transforms of $F_1$, $\ldots$, $F_6$ are exactly the only $(-1)$-curves and are contained in fibers of $\tilde{f}$. These $(-1)$-curves are contracted to get the relative minimal fibration $f:S\rightarrow \bP^1$ where $S$ happens to be minimal with $K_S^2=2$ and $p_g(S)=q(S)=0$. Denote by $\sigma:\tilde{S}\rightarrow S$ the blow down. Clearly $f$ is a fibration of genus $2$. Denote by $H$ a general fiber of $f$. We would like to study the linear system $|K_S+H|$.
\[
\begin{CD}
S @<\sigma<< \tilde{S} @>\theta>>\tilde{P}\\
@VfVV @VV\tilde{f}V @VV \tau V\\
\bP^1 @= \bP^1 @<<\varphi < P
\end{CD}
\]
Denote by $E_i=\theta_*^{-1}(\tilde{F}_i)$ and
$\tilde{F}_i:=\tau_*^{-1}(F_i)$ for $i=1,\ldots,6$. We have known that
$E_1, \ldots, E_6$ are all $(-1)$-curves. On $\tilde{P}$, let $J_1,
\ldots, J_{12}$ are 12 exceptional curves after the blowing up
$\tau$. Denote $L=\tau^*(1,0)$.  Then $\sigma^*(H)\sim \theta^*(L)$.

Take $\tilde{\delta}_1=\tau^*\delta_1-2\sum_{k=1}^{12}J_k$.
We have
\begin{eqnarray*}
K_{\tilde{S}}+\sigma^*(H)
&=&\theta^*(K_{\tilde{P}}+\tilde{\delta}_1+L)\\
&\sim & \theta^*(\tau^*(K_P+\delta_1+(1,0))-\sum_{k=1}^{12}J_k)\\
&\sim& \theta^*(\tau^*(6,1)-\sum_{k=1}^{12}J_k)\\
&\sim& \theta^*\tau^*(0,1)+2\sum_{i=1}^6 E_i.
\end{eqnarray*}
 Thus we see that $\sigma^*(K_S+H)\sim \theta^*\tau^*(0,1)+\sum_{i=1}^6 E_i$ and that $|K_S+H|$ has exactly 6 base points, but no fixed parts. Clearly a general member $\hat{C}\in |K_S+H|$ is obtained by mapping a general curve $ \theta^*\tau^*(0,1)$ onto $S$ and $\hat{C}$ is a smooth curve of genus $6$. Lemma \ref{s} tells that $|K_S+H|$ is composed with a pencil $\hat{C}$.

Now we take the triple $(S,H,\hat{C})$ and run Construction \ref{standard}. What we get is the 3-fold $X_{S, 19}$ which is canonically fibred by surfaces $F$ with $p_g(F)=19$. This is a new record with regard to \cite[Question 4.2]{Chen-Hacon}.
\end{exmp}

\begin{exmp}\label{C13}{\bf The 3-fold family $X_{\text{C}, \nu, 13}$ which are canonically fibred by curves $F$ of genus $g(F)=13$}.

We take the same triple $(S,H,\hat{C})$ as in Example \ref{S19} and run Construction \ref{v}. What we shall get is a 3-fold family $X_{C, \nu, 13}$ which are canonically fibred by curves $F$ of genus $13$ where $\nu\geq 3$.
This is again a new record with regard to \cite[Question 4.2]{Chen-Hacon}.
\end{exmp}

\begin{exmp}\label{S19Y}{\bf The 3-fold $Y_{\text{S},19}$.}

We take an alternative pair $(S,H)$ constructed by Weng (\cite{W}) and rephrased by Xiao as \cite[Examples 4.4.8]{Xiao_book}. Let $a,\ b\in \mathbb{C}$ be two numbers satisfying $a\neq 0, \pm 1$, $b\neq 0$, $ab\neq a+1$. Take $P=\bP^1\times \bP^1$. Denote by $x,\ y\in \bP^1=\bC\cup \{\infty\}$ the two coordinates of those points in $P$.
Take four projective curves defined by the following equations:
\begin{eqnarray*}
&C_1: &xy=1;\\
&C_2: &xy=a;\\
&C_3: &abx^3+ab(a+1)x+(a+1)^2y=(a+1)xy^2;\\
&C_4: &(a+1)y^2+(a+1)bx^2y^2+(a+1)bx^3y=abx^2.
\end{eqnarray*}
These curves have bi-degrees $(1,1)$, $(1,1)$, $(3,2)$ and $(3,2)$, respectively. The divisor $R_p:=C_1+C_2+C_3+C_4$ have the following triple points:
\begin{eqnarray*}
&(0,0), (0, \infty), (\infty, 0), (\infty, \infty);\\
& (\alpha_i, \D\frac{1}{\alpha_i}), (\alpha_i, \D\frac{a}{\alpha_i}), \quad i=1, \ldots, 4
\end{eqnarray*}
where the $\alpha_i'$s  ($i=1,\ldots,4$) are the roots of the following equation:
$$bx^4+(a+1)bx^2+(a+1)=0.$$
Note that $R_p$ has no more singular points. Take
$$R:=R_p+p_1^*(0+\infty+\alpha_1+\alpha_2+\alpha_3+\alpha_4).$$
For a divisor $\delta$ with $2\delta\sim R$, the pair $(\delta,R)$ gives a double covering which induces a minimal relative fibration $f:S\rightarrow \bP^1$ of genus $2$, where $S$ is a minimal surface of general type with $K_S^2=2$ and $p_g(S)=q(S)=0$. Let $H$ be a general fiber of $f$. Since $R$ has similar singularities as that in Example \ref{S19}, similar calculation shows that $|K_S+H|$ has exactly 6 base points, but no fixed parts. Thus a general member $\hat{C}\in |K_S+H|$ is a smooth curve of genus $6$. Lemma \ref{s} tells that $|K_S+H|$ is composed with a pencil $\hat{C}$.

Now if we take the triple $(S,H,\hat{C})$ and run Construction \ref{standard}. What we get is the 3-fold $Y_{S, 19}$ which is canonically fibred by surfaces $F$ with $p_g(F)=19$.
\end{exmp}

\begin{exmp}\label{C13Y}{\bf The 3-fold family $Y_{\text{C}, \nu, 13}$}.

Again we take the same triple $(S,H,\hat{C})$ as in Example \ref{S19Y} and run Construction \ref{v}. What we obtain is the 3-fold family $Y_{C, \nu, 13}$ which are canonically fibred by curves $F$ of genus $13$ with the parameter $\nu\geq 3$.
\end{exmp}

\begin{exmp}\label{S19Z}{\bf The 3-fold $Z_{\text{S},19}$.}

We take one more pair $(S,H)$ constructed by Weng and rephrased by Xiao as \cite[Examples 4.4.9]{Xiao_book}. Take $P=\bP^1\times \bP^1$. Denote by $x,\ y\in \bP^1=\bC\cup \{\infty\}$ the two coordinates of those points in $P$. Let $a\in \bC$ and $a\neq \pm1$, $\pm\sqrt{-1}$. For any point $c\in \bP^1=\bC\cup \{\infty\}$, $F_c$ denotes the fiber over $c$ of the fibration (first projection) $p_1:P\rightarrow \bP^1$. Consider the following five curves of bi-degree $(1,1)$:
\begin{eqnarray*}
&D_1: &x=y;\\
&D_2: &xy=1;\\
&D_3: &a^2y=x;\\
&D_4: &xy=a^2;\\
&D_5: &a^2xy=1.
\end{eqnarray*}
Take two curves of bi-degree $(5,3)$:
$$B_1=D_1+2D_5+F_a+F_{-a},$$
$$B_2=D_3+2D_4+F_1+F_{-1}.$$
One sees that $B_1$ and $B_2$ have no common components and they both pass through the following 12 points:
\begin{eqnarray*}
&(0,0),\ (\infty, \infty),\ (1, 1), &(-1, -1),\ (a,\D\frac{1}{a}),\ (-a, -\D\frac{1}{a});\\
& (0, \infty),\ (\infty,0),\ (1, \D\frac{1}{a^2}), &(-1, -\D\frac{1}{a^2}),\ (a,a), (-a, -a).
\end{eqnarray*}
Furthermore, $B_1$ and $B_2$ each has double points along:
$$(0,\infty), (\infty,0), (1,\frac{1}{a^2}), (-1, -\frac{1}{a^2}), (a,a), (-a, -a).$$
By Bertini theorem, the general member $B$ of the linear system generated by $B_1$ and $B_2$ is irreducible, $B$ passes through the above 12 points and the above mentioned 6 points are exactly the double points of $B$. Now take
$$R=B+D_1+D_2+D_3+F_0+F_{\infty}+F_1+F_{-1}+F_a+F_{-a}.$$
Then the 12 points are the only singular points of $R$ and each point is of multiplicity 4. Take a divisor $\delta$ such that $R\sim 2\delta$. Then $(\delta,R)$ again induces a minimal fibration $f:S\rightarrow \bP^1$ of genus 2, where $S$ is a minimal surface of general type with $K_S^2=2$ and $p_g(S)=q(S)=0$. Let $H$ be a general fiber of $f$. Since $R$ has similar singularities as that in Example \ref{S19}, similar calculation shows that $|K_S+H|$ has exactly 6 base points, but no fixed parts. Thus a general member $\hat{C}\in |K_S+H|$ is a smooth curve of genus $6$. Lemma \ref{s} tells that $|K_S+H|$ is composed with a pencil $\hat{C}$.

Now if we take the triple $(S,H,\hat{C})$ and run Construction \ref{standard}. What we get is the 3-fold $Z_{S, 19}$ which is canonically fibred by surfaces $F$ with $p_g(F)=19$.
\end{exmp}

\begin{exmp}\label{C13Z}{\bf The 3-fold family $Z_{\text{C}, \nu, 13}$}.

Still we take the same triple $(S,H,\hat{C})$ as in Example \ref{S19Z} and run Construction \ref{v}. What we obtain is a 3-fold family $Z_{C, \nu, 13}$ which are canonically fibred by curves $F$ of genus $13$.
\end{exmp}

\begin{exmp}\label{S16X}{\bf The 3-fold $X_{S,16}$}.

We take a pair $(S,H)$ found by Oort and Peters \cite{OP}, where $S$ is a numerical Godeaux surface
with $K_S^2=1$, $p_g(S)=q(S)=0$.
Consider four curves in $\mathbb{P}^2$ defined by the following equations:
\begin{align*}
C_1: \quad&Y^2+(X-1)(2X-3-2Y)=0;\\
C_2: \quad&Y^2+(X-1)(2X-3+2Y)=0;\\
C_3: \quad&Y^2+X(X-1)(X-3)=0;\\
C_4: \quad&(2-X)C_3+(X^2-3X+3)^2=0.
\end{align*}
where $C_1,C_2$ are quadratic curves and $C_3,C_4$ are cubic curves.
These four plane curves intersect at seven points:
\begin{gather*}
P_4=(\frac{3}{2},0), Q_1=(1,0), Q_2=(x_+,x_+), Q_3=(x_-,x_-),\\
Q_4=(x_+,-x_+), Q_5=(x_-,-x_-), P_2=(0,1).
\end{gather*}
where
$$x_+=\frac{3+\sqrt{-3}}{2}, \quad x_-=\frac{3-\sqrt{-3}}{2}.$$

\begin{center}
\begin{tabular}{c|c c c c c c c}
   & $P_4$ & $Q_1$ & $Q_2$ & $Q_3$ & $Q_4$ & $Q_5$ & $P_2$ \\
  \hline
  $C_1$ & 1 & 1 & 1 & 1 &   &   &   \\
  $C_2$ & 1 & 1 &   &   & 1 & 1 &   \\
  $C_3$ &   & 1 & 1 & 1 & 1 & 1 & 1 \\
  $C_4$ & 2 &   & 1 & 1 & 1 & 1 & 1 \\
\end{tabular}
\newline\newline
\end{center}

Let $D:=C_1+C_2+C_3+C_4$ which is a curve of degree 10. Since $P_4$
is an ordinary double point of $C_4$ and both $C_1,C_2$ pass through
$P_4$ with distinct tangent directions, $P_4$ is indeed an ordinary quadruple point of $D$. Triple points $Q_1,\ldots, Q_5$ are
of type $(3\rightarrow3)$(i.e. after single blowing ups, the triple points become ordinary triple points) and $P_2$ is an
ordinary double point of $D$.

Let $\delta:=\mathcal{O}_{\mathbb{P}^2}(5)$ be a divisor on $\mathbb{P}^2$. Covering data
$(\delta,D)$ determines a singular double covering over $\mathbb{P}^2$.
$D$ has five singular points of type $(3\rightarrow 3)$, one singular point of multiplicity 2 and one point of multiplicity 4.
Take successive blow-ups $\tau:\tilde{P}\rightarrow \mathbb{P}^2$ to resolve these seven singularities.

Denote by $E_{P_2}$, $E_{P_4}\in \tilde{P}$ the complete transform of the exceptional curves of $P_2$ and $P_4$, respectively.
The resolution for $Q_i$ needs two-step blow-ups, denote by $E_{Q_i}$, $E'_{Q_i}$ the two corresponding complete transforms of the exceptional curves.

Divisor $\tilde{D}:=\tau_*^{-1}(D)+\sum E_{Q_i}-\sum E'_{Q_i}$ on $\tilde{P}$ is smooth, hence the double covering
$\theta:\tilde{S}\rightarrow \tilde{P}$ is smooth. Denote by $\varphi:\mathbb{P}^2\dashrightarrow \mathbb{P}^1$ the rational map determined by the pencil of lines
passing through $P_4$ and let $\tilde{f}:=\varphi\circ\tau\circ\theta$. By Hurwitz formula we know that $\tilde{f}$
is a fibration of genus 2. On $\tilde{S}$, all the $(-1)$-curves are introduced by the desingularity of $Q_1,\ldots,Q_5$.
Contracting these five $(-1)$-curves gives a relative minimal fibration $f:S\rightarrow \mathbb{P}^1$ where $S$ is in fact
a minimal general type surface with $K_S^2=1$ and $p_g(S)=q(S)=0$. Clearly $f$ is also a fibration of genus 2. Denote by $H$ a general
fiber of $f$. Again we can determine the movable part of the linear system $|K_S+H|$.

$$\xymatrix{
S\ar[d]_{f} & \tilde{S}\ar[l]_{\sigma}\ar[r]^{\theta}\ar[d]^{\tilde{f}} & \tilde{P}\ar[d]^{\tau}\\
\mathbb{P}^1\ar@{=}[r]&\mathbb{P}^1&\mathbb{P}^2\ar@{-->}[l]^{\varphi}
}$$

Take
$$\tilde{\delta}=\tau^*\delta-E_{P_2}-2E_{P_4}-\sum E_{Q_i}-2\sum E'_{Q_i}.$$
We have
\begin{align*}
K_{\tilde{S}}+\sigma^*(H)&=\theta^*(K_{\tilde{P}}+\tilde{\delta})+\theta^*(\tau^*(\mathcal{O}_{\mathbb{P}^2}(1)-E_{P_4}))\\
&\sim \theta^*(\tau^*(\mathcal{O}_{\mathbb{P}^2}(3))-2E_{P_4}-\sum E'_{Q_i}).
\end{align*}

Denote by $E_i$ the five $(-1)$-curves on $\tilde{S}$, we have
\begin{align*}
\sigma^*(K_S+H)&=K_{\tilde{S}}+\sigma^*(H)-\sum{E_i}\\
&=\theta^*(\tau^*(\mathcal{O}_{\mathbb{P}^2}(3))-2E_{P_4}-\sum E_{Q_i})+\sum E_i.
\end{align*}

Now we want to find curves of degree 3 in $\mathbb{P}^2$ satisfying the following conditions:

(1) they pass through $P_4$ with multiplicity 2 and separated tangent directions;

(2) they pass through $Q_1, \ldots, Q_5$ with different tangent directions from that of $C_1, \ldots, C_4$ at these five points.\\
A direct computation shows that these curves form a linear system.
In fact, a general curve of the following form:
$$u(2(X-\frac{3}{2})^2Y-3(X-\frac{3}{2})Y+Y^3)+v(4(X-\frac{3}{2})^3+2(X-\frac{3}{2})^2+Y^2)=0$$
can be an appropriate candidate. So we have seen that $|K_S+H|$ has exactly 6 base points, but no fixed parts. Thus a general member $\hat{C}\in|K_S+H|$
is a smooth curve of genus $5$. Lemma \ref{s} implies that $|K_S+H|$ is composed with a pencil $\hat{C}$.

Now if we take the triple $(S,H,\hat{C})$ and run Construction \ref{standard}.
What we get is the 3-fold $X_{S, 16}$ which is canonically fibred by general type surfaces $F$ with $p_g(F)=16$.
\end{exmp}

\begin{exmp}\label{C11X}{\bf The 3-fold family $X_{C, \nu, 11}$}.

We take the same triple $(S,H,\hat{C})$ as in Example \ref{S16X} and run Construction \ref{v}.
What we obtain is the 3-fold family $X_{C, \nu, 11}$ which are canonically fibred by curves $F$ of genus $11$ with the parameter $\nu\geq 3$.
\end{exmp}

\begin{exmp}\label{S13}{\bf The 3-fold $X_{\text{S},13}$}.

Let $S$ be a minimal surface of general type with $K_S^2=1$ and $p_g(S)=0$. Take $H=K_S$. Then, among all known examples in Catanese-Pignatelli \cite{CP}, Lee \cite{Lee1, Lee2} and Reid \cite{Reid}, one knows that $|2K_S|$ is composed with a pencil of curves $\hat{C}$ of genus 4, $|2K_S|$ has no fixed part and a generic irreducible element $\hat{C}$ of $|2K_S|$ is smooth. Thus $g(\hat{C})=4$. Also one gets $d=\hat{C}.H=2K^2_S=2$. Take the triple $(S,H,\hat{C})$ and fill in Construction \ref{standard}.
Then we get the 3-fold $X_{S,13}$ which is canonically fibred by general type surfaces $F$ with $p_g(F)=13$.
\end{exmp}

\begin{exmp}\label{C9}{\bf The 3-fold family $X_{\text{C}, \nu, 9}$}.

Take the same triple $(S,H,\hat{C})$ as in Example \ref{S13} and run Construction \ref{v}, one gets the 3-fold family $X_{\text{C}, \nu, 9}$ which are canonically fibred by curves $F$ of genus $g(F)=2g(\hat{C})+d-1=9$.
\end{exmp}

{}Finally it is very interesting to know the answer to the following question:

\begin{question} Are there smooth (Gorenstein) minimal projective 3-folds of general type which are canonically fibred by surfaces (resp. curves) $F$ with $p_g(F)>19$ (resp. $g(F)>13$)?
\end{question}

\section{\bf A new class of canonically fibred surfaces of general type}

Let $M$ be a minimal projective surface of general type and assume that $|K_M|$ is composed with a pencil of curves of genus $g$. The existence of such surfaces with $p_g(M)\geq 3$ was known by Pompilij as early as 1984. In fact, there have been studies by Beauville \cite{Beauville}, Catanese \cite{Cat}, Debarre \cite{De}, Sun \cite{Sun}, Miyanishi-Yang \cite{M-Y}, Xiao \cite{X1} and others. Especially a minimal surface $M$ with $p_g=2$ is automatically canonically fibred by curves. However, among all known canonically fibred surfaces, very few examples with $g>3$ are known in literature. 

Inspired by our construction in the last section. We are able to present here at least 3 new examples with $g=13$. Of course, our construction below has the potential to illustrate other examples with $g>3$.  

\begin{exmp}\label{SS13} Take a pair $(S,H)$ satisfying Lemma \ref{s}. Let $\zeta: \hat{S}\longrightarrow S$ be the double cover corresponding to the datum $(\delta, \Delta)=(H, H_1+H_2)$ with $H_1\sim H$, $H_2\sim H$. Since $\Delta$ is smooth, we see that $\hat{S}$ is a minimal surface of general type with $K_{\hat{S}}^2=2(K_S+H)^2$ and $p_g(\hat{S})=h^0(S, K_S+H)=2$. Thus $|K_{\hat{S}}|$ is automatically composed with a pencil of curves of genus $g$.
\medskip

\noindent{\bf (\ref{SS13}.1) The surface $\hat{S}_1$ with $g=13$}. If we take the same pair $(S,H)$ as in Example \ref{S19}, what we get is a new surface $\hat{S}_1$ with $K_{\hat{S}_1}^2=12$ and, corresponding to the structure of Example \ref{C13}, $|K_{\hat{S}_1}|$ is composed with a
pencil of curves of genus $g=13$, since a general member $\hat{C}\in |K_S+H|$ is smooth as proved in Example \ref{S19}.

\noindent{\bf (\ref{SS13}.2) The surface $\hat{S}_2$ with $g=13$}. Similarly if we take the same pair $(S,H)$ as in Example \ref{S19Y}, what we get is another new surface $\hat{S}_2$ with $K^2=12$ and $g=13$. This example is corresponding to Example \ref{C13Y}.

\noindent{\bf (\ref{SS13}.3) The surface $\hat{S}_3$ with $g=13$}. If we take the same pair $(S,H)$ as in Example \ref{S19Z}, what we get is again a new surface $\hat{S}_3$ with $K^2=12$ and $g=13$. This example is corresponding to Example \ref{C13Z}.
\medskip

Anyway $g=13$ is the largest possibility that we can hope to obtain no matter what the pair $(S,H)$ is.
\end{exmp}

Finally we would like to ask the following:

\begin{question} Can one find canonically fibred general type surfaces with $g>13$?
\end{question}

\end{document}